\newcommand{\bZ}{{\mathbb Z}}
\newcommand{\Sp}{\mathop{\mathrm{Sp}}}
\newcommand{\SU}{\mathop{\mathrm{SU}}}
\newcommand{\s}{\mathop{\mathrm{S}}\nolimits}
\title{Homotopy groups of the spaces of self-maps of Lie groups}
\author{Ken-ichi Maruyama}
\address{Chiba University, Chiba, Chiba 263-8522, Japan}
\email{maruyama@faculty.chiba-u.jp}  
\author{Hideaki \=Oshima}
\address{Ibaraki University, Mito, Ibaraki 310-8512, Japan}
\email{ooshima@mx.ibaraki.ac.jp}
\begin{document}
\keywords{Function spaces; Homotopy groups; Lie groups}
\subjclass[2000]{primary 55Q05; secondary 55P10}

\begin{abstract}
We compute the homotopy groups of the spaces of self maps of Lie groups of rank 2, $\SU(3)$, $\Sp(2)$, and $G_2$. 
We use the cell structures of these Lie groups and the standard methods of homotopy theory.      
\end{abstract}
\maketitle
\newtheorem{prop}{Proposition}[section]
\newtheorem{lemma}[prop]{Lemma}
\newtheorem{theorem}[prop]{Theorem}
\newtheorem{corollary}[prop]{Corollary}
\newtheorem{example}[prop]{Example}
\newtheorem{conjecture}[prop]{Conjecture}
\newtheorem{definition}[prop]{Definition}
\newtheorem{notation}[prop]{Notation}
\numberwithin{equation}{section}
\renewcommand{\baselinestretch}{1.5}
%
\section{Introduction}

For pointed spaces $X$ and $Y$, we let $\mathrm{map}_*(X, Y)$ denote the space of pointed  maps from $X$ to $Y$. We take the trivial map $*$ as a base point of $\mathrm{map}_*(X, Y)$.  
The homotopy groups of function spaces have long been studied in homotopy theory. 
Indeed, if $X = \s^n$, then $\mathrm{map}_*(\s^n, Y)$ coincides with the iterated loop space $\Omega^n Y$. 
Hence the homotopy groups $\pi_n\mathrm{map}_*(\s^n, Y)$ are known by the homotopy groups of $Y$.  
However, even if the number of the cells of $X$ is small, the determination of the group structure of $\pi_n\mathrm{map}_*(X, Y)$ is not easy in general. 

In this paper we study the  homotopy groups of the self maps $\mathrm{map}_*(X, X)$ in the case where $X$ is a compact Lie group of rank 2. 
Precisely, we consider $\SU(3)$, $\Sp(2)$, and $G_2$. 
The homotopy-theoretic structures of these spaces are well known. 
In particular, their homotopy groups are computed in Mimura-Toda \cite{MT}, and Mimura \cite{M}. 
Our results entirely depend on their work. 

The homotopy groups of $\mathrm{map}_*(X, X)$ are closely related to the homotopy groups  of other interesting spaces. 
For instance, we have

(i) We can apply our results to the homotopy groups of the spaces of  self-homotopy equivalences. 
When $X$ is a topological group, all connected components of $\mathrm{map}_*(X, X)$ have the same homotopy type. Hence we have an isomorphism:
$$\pi_n(\mathrm{aut}_*(X), 1_X) \cong \pi_n\mathrm{map}_*(X, X)$$
where $\mathrm{aut}_*(X)$ is  the space of the based maps of $X$ which are homotopy equivalences. 
In \cite{didi}, Didierjean studied the homotopy groups of $\pi_n(\mathrm{aut}_*(X))$ for rank 2 Lie groups by using other methods. 
Our results in this paper extend some of the results in \cite{didi}.  

(ii) Our results in this paper can be used to know the homotopy types of the gauge groups $\mathcal{G}(P)$. 
Generally, for a principal $G$-bundle  $P\to X$,
 $$\mathrm{map}_P(X, BG) \simeq B\mathcal{G}(P)$$
 by Atiyah-Bott \cite{AB}, where  $\mathrm{map}_P(X, BG)$ is a subspace of $f \in \mathrm{map}(X, BG)$ 
such that $f$ is homotopic to the classifying map of $P$. 
There exists a fibration as follows.
$$ G \xrightarrow{\alpha} \mathrm{map}_{*,P}(X, BG) \to B\mathcal{G}(P) \to BG,$$
where $\mathrm{map}_{*,P}(X, BG)=\mathrm{map}_*(X, BG)\cap\mathrm{map}_P(X, BG)$. 
In particular,  when $X = \s^n$, the adjoint of the map $\alpha$ is an element of $\pi_{n-1}\mathrm{map}_*(G, G)$.
  
Finally, we make mention of the homotopy group  $\pi_0\mathrm{map}_*(X, X)$. 
This set is considered as the homotopy classes $[X, X]$, and is a group when $X$ is a topological group. 
In the case that $X$ is a connected Lie group of rank 2, $\pi_0\mathrm{map}_*(X, X)$ are studied in \cite{AOS,KO,MO,O1,O2,O3}.   

Now we state our main results in this paper.\\
 
{\sc Theorem 1.}
\begin{center}
\begin{tabular}{|c|c|c|}
\hline
$n$ & $\pi_n\mathrm{map}_*(\SU(3), \SU(3))$ & $\pi_n\mathrm{map}_*(\Sp(2), \Sp(2))$ \\
\hline
1   & $\mathbb{Z}_3^2$ & $\mathbb{Z}_2^2$ \\
\hline
2 & $\mathbb{Z} \oplus \mathbb{Z}_2 \oplus \mathbb{Z}_3 \oplus \mathbb{Z}_5$ & $\mathbb{Z}_2^3$\\
\hline
3 & $\mathbb{Z}_4 \oplus \mathbb{Z}_8 \oplus \mathbb{Z}_3^2$ & $\mathbb{Z}_2 \oplus \mathbb{Z}_4 \oplus \mathbb{Z}_8  \oplus \mathbb{Z}_5$\\
\hline
4 & $\mathbb{Z}_4 \oplus \mathbb{Z}_3^2 \oplus \mathbb{Z}_5$ &  $\mathbb{Z} \oplus \mathbb{Z}_2 \oplus \mathbb{Z}_{16} \oplus \mathbb{Z}_3 \oplus \mathbb{Z}_5 \oplus \mathbb{Z}_7$\\
\hline
5 & $\mathbb{Z}_2 \oplus A \oplus \mathbb{Z}_3^3 \oplus \mathbb{Z}_5$ & $\mathbb{Z}_2^3$ \\
\hline
6 & $\mathbb{Z}_2 \oplus \mathbb{Z}_4^2 \oplus \mathbb{Z}_3^2 \oplus \mathbb{Z}_7$ & $\mathbb{Z}_2^4$ \\
\hline
7 & $\mathbb{Z}_4 \oplus \mathbb{Z}_8 \oplus \mathbb{Z}_3^2  \oplus \mathbb{Z}_9 \oplus \mathbb{Z}_5^2$ & $\mathbb{Z}_{8} \oplus \mathbb{Z}_{32} 
\oplus \bZ_2 \oplus \bZ_9\oplus \mathbb{Z}_5^3 \oplus \mathbb{Z}_{7}$\\
\hline
8 & $\mathbb{Z}_2 \oplus \mathbb{Z}_4\oplus \bZ_8 \oplus \mathbb{Z}_3^2 \oplus \mathbb{Z}_9 \oplus \mathbb{Z}_7$ & $\mathbb{Z}_2^3 \oplus \mathbb{Z}_8 \oplus \mathbb{Z}_9 \oplus \mathbb{Z}_5 \oplus \mathbb{Z}_7$\\
\hline 
\end{tabular}
\ \vspace{0.3cm} \\
\end{center} 
Here $\mathbb{Z}_n^r $ denotes the direct sum of $r$ copies of $\mathbb{Z}_n$, and  
$A$ is $\bZ_{2}\oplus\bZ_4$ or $\bZ_8$. 
Hamanaka-Kono \cite{HK} proves $A=\bZ_8$. 

For the exceptional Lie group $G_2$ we obtain the following.\\

{\sc Theorem 2.}
$\pi_1\mathrm{map}_*(G_2, G_2) \cong \mathbb{Z}_2 \oplus \mathbb{Z}_2$.

\vspace{0.3cm} 

\noindent
{\bf Acknowledgement}. We thank Professor A. Kono for suggesting the relationship between the gauge group theory and our work. 
We thank also the referee for useful comments. 
For example, our original proof of Proposition 4.4 (4) below has been replaced by more simple one. 
\section{Preliminaries}

As defined in the introduction, $\mathrm{map}_*(X, Y)$ denote the function space of pointed maps from $X$ to $Y$.  
We consider $\mathrm{map}_*(X, Y)$ as a topological space having the compact open topology. 
We denote by $ \pi_n\mathrm{map}_*(X, Y)$ the homotopy group of the component of the trivial map.  
Namely,  $$\pi_n\mathrm{map}_*(X, Y) = \pi_n(\mathrm{map}_*(X, Y), *).$$ 
In this paper we shall identify $\pi_n\mathrm{map}_*(X,Y)$ with $[\Sigma^n X,Y]$ by the adjoint isomorphism, 
where $\Sigma^n X=\s^n\wedge X$. 

Recall that if the following diagram is commutative up to homotopy, then we call $\overline{h}$ an {\it extension} of $h$ 
and $\widetilde{f}$ a {\it coextension} of $f$. 
$$
\xymatrix{
W \ar[dr]^f &{} &{} & \Sigma W \ar[d]_{\widetilde{f}} \ar[dr]^{-\Sigma f} \\
& X \ar[r]^g & Y \ar[r]^i \ar[dr]_h & C_g \ar[d]^{\overline{h}} \ar[r]_q & \Sigma X\\
&{} &{} &Z
}
$$
Here $C_g=Y\cup_g CX$ is the reduced mapping cone of $g$, $i$ is the inclusion, and $q$ is the quotient map. 

We follow Toda's notation \cite{T3} for elements of homotopy groups of spheres. 

As is well-known, we have
\begin{gather*}
\SU(3)=\s^3\cup_{\eta_3}e^5\cup_{\phi}e^8,\quad \pi_4(\s^3)=\mathbb{Z}_2\{\eta_3\};\\
\Sp(2)=\s^3\cup_\omega e^7\cup e^{10},\quad \pi_6(\s^3)=\mathbb{Z}_{12}\{\omega\},\quad \omega=\nu'+\alpha_1(3).
\end{gather*}
Let
$$
\xymatrix{
\s^3 \ar[r]^-{i'} & C_{\eta_3} \ar[r]^-j & \SU(3);\quad \s^3 \ar[r]^-{i'} & C_{\omega} \ar[r]^-j & \Sp(2)
}
$$
be the inclusion maps. Write $i=j\circ i'$. 
Let 
$$
q_3:C_{\eta_3}\to \s^5,\quad q:\SU(3)\to \s^8;\quad q_3:C_\omega\to \s^7,\quad q:\Sp(2)\to \s^{10}
$$
be the quotient maps. 
Let
$$
\xymatrix{
\s^3\ar[r]^-i & \SU(3) \ar[r]^-p & \s^5
};\qquad
\xymatrix{
\s^3\ar[r]^-i & \Sp(2) \ar[r]^-p & \s^7
}
$$
be the canonical fibrations. 
As is well-known, $p\circ j=q_3$. 

\begin{notation} 
Given $x\in[\Sigma^m C_{\eta_3}, \SU(3)]$ (resp. $x\in [\Sigma^m C_\omega, \Sp(2)]$), 
an extension of $x$ to $\Sigma^m \SU(3)$ (resp. $\Sigma^m \Sp(2)$) is denoted by $\overline{x}\in[\Sigma^m \SU(3), \SU(3)]$ 
(resp. $\overline{x}\in[\Sigma^m \Sp(2), \Sp(2)]$), that is, $x=\big(\Sigma^m j\big)^*\overline{x}$. 
Given $z\in[\Sigma^m \s^3, \SU(3)]$ (resp. $z\in[\Sigma^m \s^3, \Sp(2)]$), we denote by 
$\overline{\overline{z}}$ 
an element of $[\Sigma^m\SU(3), \SU(3)]$ (resp. $[\Sigma^m\Sp(2),\Sp(2)]$) such that 
$z=\big(\Sigma^m i\big)^*(\overline{\overline{z}})$. 
$$
\xymatrix{
\Sigma^m C_{\eta_3} \ar[r]^-{\Sigma^m j} \ar@{.>}[dr]^x & \Sigma^m \SU(3) \ar@/_/@{.>} [d]_{\overline{x}} \ar@/^/[d]^{\overline{\overline{z}}}\\
\Sigma^m \s^3 \ar[u]^{\Sigma^m i'} \ar[r]_z & \SU(3)
}
;\qquad
\xymatrix{
\Sigma^m C_{\omega} \ar[r]^-{\Sigma^m j} \ar@{.>}[dr]^x & \Sigma^m \Sp(2) \ar@/_/@{.>} [d]_{\overline{x}} \ar@/^/ [d]^{\overline{\overline{z}}}\\
\Sigma^m \s^3 \ar[u]^{\Sigma^m i'} \ar[r]_z & \Sp(2)
}
$$
\end{notation}

For any abelian group $\Gamma$ and a set of prime numbers $P$, 
let $\Gamma_{(P)}$ be the localization of $\Gamma$ at $P$. 
Given maps $f:X\to Y$ and $g:Y\to Z$, we usually denote their composition by $g\circ f$, 
but sometimes we denote it simply by $gf$.

\section{$\pi_n\mathrm{map}_*(\SU(3), \SU(3))$}

The odd primary components of $[\Sigma^n \SU(3), \SU(3)]$ are easily obtained from the results in \cite{T3}, 
since if $p$ is an odd prime, then
$\SU(3)_{(p)} \simeq \s^3_{(p)} \times \s^5_{(p)}$ (homotopy equivalent).  
Thus 
\begin{equation}\label{oddcomponent}
[\Sigma^n \SU(3), \SU(3)]_{(p)}\cong \pi_{n+3}(\s^3 \times \s^5)_{(p)} \oplus \pi_{n+5}(\s^3 \times \s^5)_{(p)} \oplus \pi_{n+8}(\s^3 \times \s^5)_{(p)}.
\end{equation}
Hence in the rest of this section we calculate $[\Sigma^n\SU(3),\SU(3)]_{(2)}$ for $n\ge 1$. 
We use
$$
\begin{array}{|c|c|c||c|c|c|} \hline
n & \pi_n\SU(3) & \mbox{gen. of 2-comp.} & n & \pi_n\SU(3) & \mbox{gen. of 2-comp.} \\ \hline
1,2,4,7 & 0 & & 12 & \bZ_4\oplus\bZ_{15} & [\sigma'''] \ (2[\sigma''']=i_*\mu_3)\\ \hline
3 & \bZ & i_*\iota_3 & 13 & \bZ_2\oplus\bZ_3 & i_*\varepsilon' \\ \hline
5 & \bZ & [2\iota_5] & 14 & \bZ_4\oplus\bZ_2\oplus\bZ_{21} & [\nu_5^2]\nu_{11},\ i_*\mu' \\ \hline
6 & \bZ_2\oplus\bZ_3 & i_*\nu' & 15 & \bZ_4\oplus\bZ_9 & [2\iota_5]\nu_5\sigma_8 \\ \hline
8 & \bZ_4\oplus\bZ_3 & [2\iota_5]\nu_5& 16 & \bZ_4\oplus\bZ_2\oplus\bZ_{63}\oplus\bZ_3 & [2\iota_5]\zeta_5,\ [\nu_5\overline{\nu}_8] \\ \hline
9 & \bZ_3 & & 17 & \bZ_2\oplus\bZ_2\oplus\bZ_{15} & [\nu_5]\nu_{11}^2,\ [\nu_5\eta_8\varepsilon_9] \\ \hline
10 & \bZ_2\oplus\bZ_{15}& [\nu_5\eta_8^2] & 18 & \bZ_2\oplus\bZ_2\oplus\bZ_{15}\oplus\bZ_3  & i_*\overline{\varepsilon}_3,\  [\nu_5\eta_8\mu_9] \\ \hline
11 & \bZ_4 & [\nu_5^2] \ (2[\nu_5^2]=i_*\varepsilon_3) & 19 & \bZ_4\oplus\bZ_2\oplus\bZ_2\oplus\bZ_3^2 & [\sigma''']\sigma_{12},\  [\nu_5\overline{\nu}_8]\nu_{16} \\ \hline
\end{array}
$$
\begin{eqnarray*}
\text{Table 1 :}\ \pi_n(\SU(3))
\end{eqnarray*}
This is contained in \cite{MT} with the following notation: 
$[x] \in \pi_n(\SU(3))$ denotes an element such that $p_*[x] = x$. 

Fist we prove $[\Sigma\SU(3),\SU(3)]_{(2)}=0$. 
By Table 1, we have the following exact sequence.
$$
\begin{CD}
0@>(\Sigma q)^*>>[\Sigma \SU(3), \SU(3)]_{(2)}@>(\Sigma j)^*>>[\s^4\cup_{\eta_4}e^6, \SU(3)]_{(2)}
\end{CD}
$$
It suffices for our purpose to prove 
\begin{equation}\label{eta4-1}
[\s^4\cup_{\eta_4}e^6,\SU(3)]_{(2)}=0.
\end{equation} 
By Table 1 we have the following exact sequence. 
\begin{equation}\label{eta4}
\begin{CD}
\quad \bZ_{(2)}\big\{[2\iota_5]\big\}@>\eta_5^*>>\bZ_2\{i_*\nu'\}@>(\Sigma q_3)^*>>[\s^4\cup_{\eta_4}e^6,\SU(3)]_{(2)}@>(\Sigma i')^*>>0.
\end{CD}
\end{equation}
We use the following theorem \cite[Theorem 2.1]{MT}.

\begin{theorem}[\cite{MT}]\label{MT}
Let $F\overset{i}{\to} X\overset{p}{\to} B$ be a fibration, and $\partial : \pi_n(B) \to \pi_{n-1}(F)$ the boundary operator. 
Assume that $\alpha\in\pi_{m+1}(B),\ \beta\in\pi_l(\s^m)$ and $\gamma\in\pi_k(\s^l)$ satisfying $\partial\alpha\circ\beta=0$ and $\beta\circ\gamma=0$. 
For an arbitrary element $\delta\in\{\partial\alpha,\beta,\gamma\}\subset\pi_{k+1}(F)$, 
there exists an element $\epsilon\in\pi_{l+1}(X)$ such that 
$p_*\epsilon=\alpha\circ\Sigma\beta$ and $i_*\delta=\epsilon\circ\Sigma\gamma$.
\end{theorem}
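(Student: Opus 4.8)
The plan is to produce $\epsilon$ in two stages: first establish its existence by a formal exact-sequence argument, and then pin down the relation $i_*\delta=\epsilon\circ\Sigma\gamma$ by an explicit hemisphere construction that reads off a representative of the Toda bracket. First I would record the elementary formula $\partial(\alpha\circ\Sigma\beta)=(\partial\alpha)\circ\beta$. This follows from the fact that, under the adjoint isomorphism $\pi_{*}(B)\cong\pi_{*-1}(\Omega B)$, precomposition with a suspension $\Sigma\beta$ corresponds to precomposition with $\beta$, while $\partial$ is induced by the connecting map $\Omega B\to F$. Granting this, the hypothesis $(\partial\alpha)\circ\beta=0$ gives $\partial(\alpha\circ\Sigma\beta)=0$, so by exactness of the homotopy sequence of the fibration $\alpha\circ\Sigma\beta\in\image p_*$, and we may choose $\epsilon\in\pi_{l+1}(X)$ with $p_*\epsilon=\alpha\circ\Sigma\beta$. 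Then $p_*(\epsilon\circ\Sigma\gamma)=\alpha\circ\Sigma(\beta\circ\gamma)=0$, so $\epsilon\circ\Sigma\gamma=i_*\delta_0$ for some $\delta_0\in\pi_{k+1}(F)$; everything then rests on identifying $\delta_0$ with an element of $\{\partial\alpha,\beta,\gamma\}$.

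For that identification I would choose $\epsilon$ concretely rather than abstractly. Represent $\alpha$ by a map $a:(\s^{m+1},*)\to(B,*)$ that collapses the lower hemisphere, so $a$ restricts to $a':D^{m+1}=C\s^m\to B$ with $a'|_{\s^m}=*$. Lift $a'$ over the contractible disk to $\widetilde a:C\s^m\to X$ with $\widetilde a|_{\s^m}=i\circ\partial\alpha$; thus $\widetilde a$ exhibits $i\circ\partial\alpha$ as nullhomotopic in $X$, consistent with $i_*\partial\alpha=0$. Writing $\s^{l+1}=C\s^l\cup_{\s^l}C\s^l$ and $\Sigma\beta=C\beta\cup C\beta$, I take $\epsilon=(\widetilde a\circ C\beta)\cup_{\s^l}H$, where $H:C\s^l\to F$ is a nullhomotopy in $F$ of $(\partial\alpha)\circ\beta$ (available by hypothesis). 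This $\epsilon$ lifts $\alpha\circ\Sigma\beta$, and different choices of $H$ alter it by $i_*\pi_{l+1}(F)$.

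Now I compute $\epsilon\circ\Sigma\gamma$ using $\Sigma\gamma=C\gamma\cup C\gamma$. On the lower cone it is $H\circ C\gamma$, which already lands in $F$; on the upper cone it is $\widetilde a\circ C(\beta\circ\gamma)$. The decisive move is that, since $D^{m+1}$ is contractible, the cone $C(\beta\circ\gamma)$ and a chosen nullhomotopy $D:C\s^k\to\s^m$ of $\beta\circ\gamma$ agree on $\s^k$ and are therefore homotopic rel $\s^k$ as maps into $D^{m+1}$; hence $\widetilde a\circ C(\beta\circ\gamma)\simeq i\circ(\partial\alpha)\circ D$ rel $\s^k$. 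Reassembling the two cones gives $\epsilon\circ\Sigma\gamma\simeq i_*\delta_0$ with $\delta_0=((\partial\alpha)\circ D)\cup_{\s^k}(H\circ C\gamma)$, which is precisely $\overline{\partial\alpha}\circ\widetilde\gamma$ for the extension $\overline{\partial\alpha}$ of $\partial\alpha$ determined by $H$ and the coextension $\widetilde\gamma$ of $\gamma$ determined by $D$; that is, $\delta_0\in\{\partial\alpha,\beta,\gamma\}$.

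Finally I would dispatch the indeterminacy so as to reach an \emph{arbitrary} bracket element. Writing $\delta=\delta_0+(\partial\alpha)\circ\mu+\xi\circ\Sigma\gamma$ with $\mu\in\pi_{k+1}(\s^m)$ and $\xi\in\pi_{l+1}(F)$, set $\epsilon'=\epsilon+i_*\xi$; then $p_*\epsilon'=\alpha\circ\Sigma\beta$, the term $\xi\circ\Sigma\gamma$ is absorbed because $i_*\xi\circ\Sigma\gamma=i_*(\xi\circ\Sigma\gamma)$, and the term $(\partial\alpha)\circ\mu$ maps to $0$ under $i_*$ since $i_*\partial\alpha=0$, so $i_*\delta=\epsilon'\circ\Sigma\gamma$. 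The main obstacle is the middle identification: arranging the two hemisphere decompositions, their orientations, and the rel-$\s^k$ homotopy so that the two cone pieces assemble to exactly the representative $\overline{\partial\alpha}\circ\widetilde\gamma$ (with correct signs) rather than merely to some element in the same coset. This Toda-style basepoint and sign bookkeeping, together with verifying the adjoint formula for $\partial$, is where the real work lies; the existence and indeterminacy steps are formal.
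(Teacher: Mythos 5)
The paper gives no proof of this statement at all: it is quoted, with citation, from Mimura--Toda \cite{MT}, so the only possible comparison is with the classical argument in that source. Your proposal is essentially that classical argument, and it is correct: the lift $\widetilde a$ of the hemisphere representing $\alpha$, glued along the equator to a fibre-wise nullhomotopy $H$ of $(\partial\alpha)\circ\beta$, is exactly the standard construction of $\epsilon$; the rel-$\s^k$ homotopy inside the contractible cone $C\s^m$ correctly identifies $\epsilon\circ\Sigma\gamma$ with $i_*$ of the two-nullhomotopy representative $((\partial\alpha)\circ D)\cup_{\s^k}(H\circ C\gamma)$ of the secondary composition; and your final adjustment $\epsilon'=\epsilon+i_*\xi$ handles the full indeterminacy $(\partial\alpha)\circ\pi_{k+1}(\s^m)+\pi_{l+1}(F)\circ\Sigma\gamma$ correctly, using that precomposition with the suspension $\Sigma\gamma$ is additive and that $i_*\partial\alpha=0$. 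The only loose end is the one you flag yourself — fixing orientations so the glued representative lies in $\{\partial\alpha,\beta,\gamma\}$ itself rather than its negative — and this is routine convention bookkeeping (the paper's coextension convention $q\circ\widetilde{f}\simeq-\Sigma f$ is set up precisely so that the two descriptions of the bracket agree), not a gap in the argument.
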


We apply this theorem to the fibration $\s^3\overset{i}{\to} \SU(3)\overset{p}{\to} \s^5$ by taking 
$$
\alpha=\iota_5,\quad \beta=2\iota_4,\quad \gamma=\eta_4,\quad k=5, \quad l=m=4.
$$
Indeed this case can be applied, since $\beta\circ\gamma=0$ and $\partial\alpha=\eta_3$ 
so that $\partial\alpha\circ\beta=0$. 
It follows that for any $\delta\in\{\partial\alpha,\beta,\gamma\}$ there exists $\epsilon\in\pi_5(\SU(3))$ such that 
$$
p_*\epsilon=\alpha\circ\Sigma\beta=2\iota_5, \quad i_*\delta=\epsilon\circ\Sigma\gamma.
$$
In particular we have $\epsilon=[2\iota_5]$. 
Since $\{\eta_3, 2\iota_4, \eta_4\}=\{\nu',-\nu'\}$ by \cite[(5.4)]{T3}, we then have 
\begin{equation}\label{pi6}
i_*\nu'=[2\iota_5]\circ\eta_5=\eta_5^*[2\iota_5].
\end{equation} 
Hence by (\ref{eta4}) we have (\ref{eta4-1}) as desired. 

In order to calculate $[\Sigma^n \SU(3), \SU(3)]_{(2)}$ for $n\ge 2$, we recall a result of Browder-Spanier 
\cite{bs} that the attaching map of the top cell of an $H$-space is stably trivial. 
Hence
\begin{equation}\label{stabletrivia} 
\Sigma^3 \SU(3) \simeq \s^6 \cup_{\eta_{6}} e^8 \vee \s^{11}.
\end{equation}
More precisely, we can prove 
$$
\Sigma\phi = \Sigma i' \circ\nu_4\circ\eta_7. 
$$
We do not use this equality in this paper. 
So we omit its proof. 
We have

\begin{lemma}\label{su-iso} 
$[\Sigma^n \SU(3), \SU(3)] \cong \pi_{8+n}(\SU(3)) \oplus [C_{\eta_{3+n}}, \SU(3)]$ for $n\ge 2$. 
\end{lemma}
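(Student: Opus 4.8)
The plan is to read off the decomposition from the cofiber sequence of the attaching map $\phi$ of the top cell. Suspending $\s^7\xrightarrow{\phi}C_{\eta_3}\xrightarrow{j}\SU(3)\xrightarrow{q}\s^8$ $n$ times and applying $[-,\SU(3)]$ to the resulting Puppe sequence yields the exact sequence
\begin{equation*}
[\Sigma^{n+1}C_{\eta_3},\SU(3)]\xrightarrow{(\Sigma^{n+1}\phi)^*}\pi_{n+8}(\SU(3))\xrightarrow{(\Sigma^n q)^*}[\Sigma^n\SU(3),\SU(3)]\xrightarrow{(\Sigma^n j)^*}[\Sigma^n C_{\eta_3},\SU(3)]\xrightarrow{(\Sigma^n\phi)^*}\pi_{n+7}(\SU(3)).
\end{equation*}
So it suffices to prove three things for $n\ge2$: that $(\Sigma^{n+1}\phi)^*=0$ (making $(\Sigma^n q)^*$ injective), that $(\Sigma^n\phi)^*=0$ (making $(\Sigma^n j)^*$ surjective), and that the resulting short exact sequence
\begin{equation*}
0\to\pi_{n+8}(\SU(3))\xrightarrow{(\Sigma^n q)^*}[\Sigma^n\SU(3),\SU(3)]\xrightarrow{(\Sigma^n j)^*}[\Sigma^n C_{\eta_3},\SU(3)]\to0
\end{equation*}
splits.

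For the two vanishing statements I would use (\ref{stabletrivia}) together with Table~1. Since $\Sigma^3\SU(3)\simeq\Sigma^3 C_{\eta_3}\vee\s^{11}$, the attaching map satisfies $\Sigma^3\phi\simeq0$, hence $\Sigma^m\phi\simeq0$ for every $m\ge3$. In particular $\Sigma^{n+1}\phi\simeq0$ for all $n\ge2$, giving $(\Sigma^{n+1}\phi)^*=0$; and $\Sigma^n\phi\simeq0$ already gives $(\Sigma^n\phi)^*=0$ whenever $n\ge3$. The only remaining vanishing is $(\Sigma^2\phi)^*=0$. Here $(\Sigma^2\phi)^*$ maps into $\pi_9(\SU(3))$, which by Table~1 is $\bZ_3$; localized at $2$ this group is trivial, and localized at any odd prime $p$ the splitting $\SU(3)_{(p)}\simeq\s^3_{(p)}\times\s^5_{(p)}$ forces $\Sigma^2\phi$ to be null $p$-locally. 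Hence $(\Sigma^2\phi)^*=0$ integrally and $(\Sigma^2 j)^*$ is onto.

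It remains to split the short exact sequence. For $n\ge3$ this is immediate: suspending (\ref{stabletrivia}) gives $\Sigma^n\SU(3)\simeq\Sigma^n C_{\eta_3}\vee\s^{n+8}$, under which $\Sigma^n j$ and $\Sigma^n q$ become the wedge inclusion and projection, so applying $[-,\SU(3)]$ splits the sequence as a direct sum. The remaining, and genuinely delicate, case is $n=2$, where $\Sigma^2\SU(3)$ need not split off its top cell. The plan here is to show that in fact $\Sigma^2\phi\simeq0$: at odd primes this follows from the product splitting of $\SU(3)_{(p)}$, while at the prime $2$ one writes $\Sigma^2\phi=\Sigma^2 i'\circ(\nu_5\eta_8)$ and uses the relation $\nu_5\eta_8=\eta_5\nu_6$ in $\pi_9(\s^5)$ (both generating $\bZ_2$); since $\Sigma^2 i'\circ\eta_5\simeq0$ because $\eta_5$ is the attaching map of $\Sigma^2 C_{\eta_3}$, we obtain $\Sigma^2\phi=\Sigma^2 i'\circ\eta_5\circ\nu_6\simeq0$ there too. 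This upgrades the $n=2$ case to the same wedge decomposition, hence the same direct-sum splitting.

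I expect the last step to be the main obstacle. Everything through the short exact sequence is formal, and the $n\ge3$ splitting is free from (\ref{stabletrivia}); but the $n=2$ splitting hinges on the $2$-primary composite $\nu_5\eta_8$ and the coincidence $\nu_5\eta_8=\eta_5\nu_6$ in the unstable group $\pi_9(\s^5)$. If one prefers to avoid identifying $\Sigma^2\phi$ (as is done for $\Sigma\phi$), the alternative is to split the $2$-local extension $0\to\bZ_2\to[\Sigma^2\SU(3),\SU(3)]_{(2)}\to[\Sigma^2 C_{\eta_3},\SU(3)]_{(2)}\to0$ directly, for instance by exhibiting a homomorphic section built from explicit extensions $\overline{x}$ as in the Notation of Section~2; verifying that such a section exists, equivalently that the extension class vanishes, is where the real work lies.
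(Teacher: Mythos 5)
Your treatment of $n\ge 3$ is correct and is the same as the paper's: suspending (\ref{stabletrivia}) gives $\Sigma^n\SU(3)\simeq C_{\eta_{3+n}}\vee \s^{8+n}$, and the splitting of the group follows at once. The proposal breaks down at $n=2$, and it does so on a point of fact: the relation $\nu_5\eta_8=\eta_5\nu_6$ in $\pi_9(\s^5)$, on which your whole $n=2$ argument rests, is false. Suspending Toda's relation $\eta_3\nu_4=\nu'\eta_6$ twice and using $\Sigma^2\nu'=2\nu_5$ gives $\eta_5\nu_6=(2\nu_5)\circ\eta_8=2(\nu_5\eta_8)=0$, whereas $\nu_5\eta_8$ is the generator of $\pi_9(\s^5)\cong\bZ_2$ \cite{T3}. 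So $\eta_5\nu_6=0\neq\nu_5\eta_8$, and the factorization $\Sigma^2\phi=\Sigma^2 i'\circ\eta_5\circ\nu_6$ is unavailable. Worse, the statement you are reducing to is itself false: granting the paper's formula $\Sigma\phi=\Sigma i'\circ\nu_4\circ\eta_7$, one has $\Sigma^2\phi=\Sigma^2 i'\circ\nu_5\eta_8$, and $(\Sigma^2 i')_*\colon\pi_9(\s^5)\to\pi_9(C_{\eta_5})$ is a monomorphism, since by homotopy excision its kernel is $\eta_5\circ\pi_9(\s^6)$, which vanishes precisely because $\eta_5\nu_6=0$ (the very relation that works against you). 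Hence $\Sigma^2\phi\neq 0$: the top cell of $\Sigma^2\SU(3)$ genuinely does not split off, which is exactly why the paper asserts (\ref{stabletrivia}) only after three suspensions. Your fallback (exhibiting a homomorphic section of the $2$-local extension) is precisely the step you leave undone, so the case $n=2$ is not established.

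The missing idea, which is the paper's entire proof for $n=2$, is to use the loop-space structure of $\SU(3)$ instead of the cell structure of $\Sigma^2\SU(3)$: since $\SU(3)\simeq\Omega B\SU(3)$, adjunction gives $[\Sigma^2\SU(3),\SU(3)]\cong[\Sigma^3\SU(3),B\SU(3)]$, and now (\ref{stabletrivia}) applies to the source, yielding $[C_{\eta_6},B\SU(3)]\oplus\pi_{11}(B\SU(3))\cong[C_{\eta_5},\SU(3)]\oplus\pi_{10}(\SU(3))$ after adjointing back; the delooping supplies the third suspension that the complex $\Sigma^2\SU(3)$ itself lacks. (Your exact-sequence route could in principle be repaired without deciding $\Sigma^2\phi$: the image of $(\Sigma^2\phi)^*$ in $\pi_9(\SU(3))\cong\bZ_3$ is $2$-torsion because $\Sigma^2\phi$ factors through $\eta_8$, hence it is zero, so the sequence is short exact; and once one computes $[C_{\eta_5},\SU(3)]\cong\bZ$ as in subsection 3.1, the quotient is free and the sequence splits. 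But that inverts the paper's order of computation, whereas the $B\SU(3)$ adjunction is immediate.)
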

\begin{proof}
If $n \geq 3$, then the result follows from (\ref{stabletrivia}). 
For $n = 2$, we have 
$$[\Sigma^2 \SU(3), \SU(3)] \cong [\Sigma^3 \SU(3), B\SU(3)]$$ 
and the lemma follows also from (\ref{stabletrivia}).
\end{proof}

Hence it suffices for our purpose to determine $[C_{\eta_{3+n}}, \SU(3)]_{(2)}$ for $n\ge 2$.

The generators of the 2-components of $[\Sigma^n \SU(3), \SU(3)]$ 
are as follows.
\newcommand{\tabtopsp}[1]{\vbox{\vbox to#1{}\vbox to10pt{}}}
\begin{center}
\begin{tabular}{|c|c|c|}
\hline
$n$ & \text{$2$-{\it components}} & \text{\it generators} \\
\hline\tabtopsp{1mm}
1   & $0$ & {} \\[2mm]
\hline\tabtopsp{0.2cm}
2 & $\mathbb{Z} \oplus \mathbb{Z}_2$  & $\overline{\overline{2[2\iota_5]}},\  (\Sigma^2q)^*[\nu_5\eta_8^2]$\\[2mm]
\hline\tabtopsp{0.2cm}
3 & $\mathbb{Z}_4 \oplus \mathbb{Z}_8$ & $(\Sigma^3q)^*[\nu_5^2]$,\ $\overline{\overline{i_*\nu'}}$ \\[2mm]
\hline\tabtopsp{0.2cm}
4 & $\mathbb{Z}_4$ &  $(\Sigma^4 q)^*[\sigma''']$\\[2mm]
\hline\tabtopsp{0.2cm}
5 & $\mathbb{Z}_2 \oplus \mathbb{Z}_8 $ & $(\Sigma^5q)^*i_*\varepsilon',\ \overline{\overline{[2\iota_5]\circ\nu_5}} $ \\[2mm]
\hline\tabtopsp{0.2cm}
6 & $\mathbb{Z}_2 \oplus  \mathbb{Z}_4 \oplus \mathbb{Z}_4 $ & $(\Sigma^6q)^*i_*\mu',\ (\Sigma^6q)^*([\nu_5^2]\circ\nu_{11}),\    \overline{{\Sigma^6q_3}^*[\nu_5^2]} $ \\[2mm]
\hline\tabtopsp{0.2cm}
7 & $\mathbb{Z}_4 \oplus \mathbb{Z}_8 $ & $(\Sigma^7q)^*([2\iota_5]\circ\nu_5\sigma_8),\  \overline{\overline{[\nu_5\eta_8^2]}}, $\\[2mm]
\hline\tabtopsp{0.2cm}
8 & $\mathbb{Z}_2 \oplus \mathbb{Z}_4 \oplus \bZ_8 $ & $(\Sigma^8q)^*[\nu_5\bar{\nu}_8],\ (\Sigma^8q)^*([2\iota_5]\circ\zeta_5), \ \overline{\overline{[\nu_5^2]}}$\\[2mm]
\hline
\end{tabular}
\end{center}
\begin{eqnarray*}
\text{Table 2 : $2$-components of}\ [\Sigma^n \SU(3), \SU(3)]
\end{eqnarray*}

\vspace{0.3cm}

\subsection{$[C_{\eta_5}, \SU(3)]$} 

By Table 1, we have the following exact sequence.
$$
\begin{CD}
0 @>>>[\s^5\cup_{\eta_5}e^7, \SU(3)]@>>>\bZ\big\{[2\iota_5]\big\}@>\eta_5^*>>\bZ_2\{i_*\nu'\}\oplus\bZ_3
\end{CD}
$$
Hence by (\ref{pi6}) we have $[C_{\eta_5}, \SU(3)]=\mathbb{Z}\big\{\overline{2[2\iota_5]}\big\}$.  
Thus we obtain
$$
[\Sigma^2 \SU(3), \SU(3)]=\mathbb{Z}\Big\{\overline{\overline{2[2\iota_5]}}\Big\}
\oplus\mathbb{Z}_2\big\{\big(\Sigma^2 q\big)^*[\nu_5\eta_8^2]\big\}\oplus\mathbb{Z}_{15}.
$$

\vspace{0.3cm}

\subsection{$[C_{\eta_6}, \SU(3)]_{(2)}$}

By \cite{T3} and Table 1, we have the following commutative diagram with exact rows and columns.
$$
\begin{CD}
\bZ_2\{\nu'\eta_6\} @>\eta_7^*>\cong> \bZ_2\{\nu'\eta_6^2\}@>>>[C_{\eta_6}, \s^3]_{(2)}@>>>\bZ_4\{\nu'\}@>\eta_6^*>>\bZ_2\{\nu'\eta_6\}\\
@VVV @VVV @VV i_*V @VV i_*V @VVV \\
0 @>>>\bZ_4\big\{[2\iota_5]\nu_5\big\} @>(\Sigma^3q_3)^*>>[C_{\eta_6}, \SU(3)]_{(2)} @>(\Sigma^3i')^*>>\bZ_2\{i_*\nu'\} @>>> 0\\
@VVV @VV p_*V @VV p_*V @VVV @VVV \\
\bZ_2\{\eta_5^2\}@>\eta_7^*>> \bZ_8\{\nu_5\}@>(\Sigma^3q_3)^*>>[C_{\eta_6},\s^5]_{(2)}@>>>\bZ_2\{\eta_5\}@>\eta_6^*>\cong>\bZ_2\{\eta_5^2\}
\end{CD}
$$
By the first and third rows, we have the following results (\cite[Propositions 3.3 and 3.1]{KMNST}):
\begin{equation}\label{sphere}
[C_{\eta_6}, \s^3]_{(2)}=\mathbb{Z}_2\big\{\overline{2\nu'}\big\},\quad [C_{\eta_6},\s^5]_{(2)}=\mathbb{Z}_4\big\{(\Sigma^3q_3)^*\nu_5\big\}.
\end{equation}
By the second row, the order of $[C_{\eta_6}, \SU(3)]_{(2)}$ is $8$. 
Hence the middle column is short exact by (\ref{sphere}). 
Since 
$$
p_*(\Sigma^3q_3)^*([2\iota_5]\circ\nu_5)=(\Sigma^3q_3)^*p_*([2\iota_5]\circ\nu_5)=2(\Sigma^3q_3)^*\nu_5,
$$ 
we have $[C_{\eta_6}, \SU(3)]_{(2)}\not\cong \mathbb{Z}_4\oplus\mathbb{Z}_2$. 
Hence $[C_{\eta_6},\SU(3)]_{(2)}=\mathbb{Z}_8\big\{\overline{i_*\nu'}\big\}$.

\vspace{0.3cm}
\subsection{$[C_{\eta_7}, \SU(3)]_{(2)}$}

By Table 1, we easily see that $[C_{\eta_7}, \SU(3)]_{(2)} = 0$. 

\vspace{0.3cm}
\subsection{$[C_{\eta_8}, \SU(3)]_{(2)}$} 

By Table 1, we have the following exact sequence:
$$
\begin{CD}
0@>>>\mathbb{Z}_2\big\{[\nu_5\eta_8^2]\big\}@>(\Sigma^5q_3)^*>>[C_{\eta_8},\SU(3)]_{(2)}@>(\Sigma^5i')^*>>\mathbb{Z}_4\big\{[2\iota_5]\circ\nu_5\big\}@>>>0
\end{CD}
$$  
This does not split as shown by Hamanaka-Kono \cite{HK}. 
Hence 
$$
[C_{\eta_8},\SU(3)]_{(2)}=\mathbb{Z}_8\big\{\overline{[2\iota_5]\circ\nu_5}\big\}.
$$ 
\vspace{0.3cm}
\subsection{$[C_{\eta_9}, \SU(3)]_{(2)}$}

By Table 1, we have the following exact sequence:
$$
\begin{CD}
\bZ_2\big\{[\nu_5\eta_8^2]\big\} @>\eta_{10}^*>> \bZ_4\big\{[\nu_5^2]\big\} @>(\Sigma^6q_3)^*>> [C_{\eta_{9}}, \SU(3)]_{(2)}  @>>>  0
\end{CD}
$$
Thus $\eta_{10}^*[\nu_5\eta_8^2]$ is $0$ or $2[\nu_5^2]$. 
To induce a contradiction, assume $\eta_{10}^*[\nu_5\eta_8^2]=2[\nu_5^2]$. 
Then 
$2\big([\nu_5^2]\circ\nu_{11}\big)=\big(2[\nu_5^2]\big)\circ\nu_{11}=[\nu_5\eta_8^2]\circ\eta_{10}\circ\nu_{11}=0$ 
since $\eta_{10}\circ\nu_{11}=0$ by \cite{T3}. 
This contradicts the fact that the order of $[\nu_5^2]\circ\nu_{11}$ is $4$. 
Hence 
\begin{equation}\label{nueta^2}
[\nu_5\eta_8^2]\circ\eta_{10}=0
\end{equation}
so that 
$$
[C_{\eta_9}, \SU(3)]_{(2)}=\mathbb{Z}_4\big\{(\Sigma^6q_3)^*[\nu_5^2]\big\}.
$$

\vspace{0.3cm}
\subsection{$[C_{\eta_{10}}, \SU(3)]_{(2)}$} 

The purpose of this subsection is to prove 
\begin{equation}\label{pi-70}
[C_{\eta_{10}}, \SU(3)]_{(2)}=\mathbb{Z}_8\big\{\overline{[\nu_5\eta_8^2]}\big\}.
\end{equation}
By \cite{T3}, Table 1 and (\ref{nueta^2}), we have the following commutative diagram with exact rows and columns:
\begin{equation}\label{pi-69}
\begin{CD}
\mathbb{Z}_2\{\varepsilon_3\}@>\eta_{11}^*>>\mathbb{Z}_2^2\{\varepsilon_3\eta_{11}, \mu_3\}
@>(\Sigma^7q_3)^*>>[C_{\eta_{10}},\s^3]_{(2)}@>>>0 @.\\
@VVV @VV i_*V @VV i_*V @. @.\\
\mathbb{Z}_4\big\{[\nu_5^2]\big\} @>\eta_{11}^* >> \mathbb{Z}_4\big\{[\sigma''']\big\} @>(\Sigma^7q_3)^* >> [C_{\eta_{10}},\SU(3)]_{(2)} 
@>(\Sigma^7i')^*>>\mathbb{Z}_2\big\{[\nu_5\eta_8^2]\big\}@>\eta_{10}^*>>0 \\
@VVV @VVV @VV p_*V @V\cong V p_*V @.\\
\mathbb{Z}_2\{\nu_5^2\}@>\eta_{11}^*=0>>\mathbb{Z}_2\{\sigma'''\}@>(\Sigma^7q_3)^*>>[C_{\eta_{10}},\s^5]_{(2)}
@>(\Sigma^7 i')^*>> \mathbb{Z}_2\{\nu_5\eta_8^2\} @>\eta_{10}^*>> 0
\end{CD}
\end{equation}
By the first row, we have the following result (\cite[Proposition 3.7]{KMNST}):
\begin{equation}
[C_{\eta_{10}},\s^3]_{(2)} = \mathbb{Z}_2\big\{ (\Sigma^7 q_3)^* \mu_3 \big\}.
\end{equation}
We need

\begin{prop}\label{eta10}
\begin{enumerate}
\item $[\nu_5^2]\circ\eta_{11}=0$.
\item $($\cite[Proposition 3.5]{KMNST}$)$\quad $[C_{\eta_{10}},\s^5]_{(2)}=\mathbb{Z}_4\big\{\overline{\nu_5\eta_8^2}\big\}$.
\end{enumerate}
\end{prop}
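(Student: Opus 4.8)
The plan is to establish both parts using the homotopy of spheres and the diagram (\ref{pi-69}). For part (1), I would work in the relevant homotopy group $\pi_{12}(\SU(3))_{(2)}=\bZ_4\{[\sigma''']\}$, where $[\nu_5^2]\circ\eta_{11}$ lives (since $[\nu_5^2]\in\pi_{11}(\SU(3))$ and we postcompose with $\eta_{11}\in\pi_{12}(\s^{11})$). The cleanest route is to push down to the base sphere via $p_*$ and pull information from the total-space relation $2[\nu_5^2]=i_*\varepsilon_3$ recorded in Table 1. Applying $p_*$ to $[\nu_5^2]\circ\eta_{11}$ gives $\nu_5^2\circ\eta_{11}$, and the bottom row of (\ref{pi-69}) already records $\eta_{11}^*=0$ on $\bZ_2\{\nu_5^2\}$, i.e.\ $\nu_5^2\circ\eta_{11}=0$. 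Hence $[\nu_5^2]\circ\eta_{11}\in\Ker p_*=\image i_*$, so it equals $i_*x$ for some $x\in\pi_{12}(\s^3)_{(2)}$. To pin down $x$ I would use the factorization $2[\nu_5^2]=i_*\varepsilon_3$: then $2\big([\nu_5^2]\circ\eta_{11}\big)=\big(2[\nu_5^2]\big)\circ\eta_{11}=i_*(\varepsilon_3\circ\eta_{11})=i_*(\varepsilon_3\eta_{11})$, and I would check in Toda's tables whether $\varepsilon_3\eta_{11}$ is nonzero and whether its image under $i_*$ vanishes in $\pi_{12}(\SU(3))_{(2)}$. The likely conclusion is that $[\nu_5^2]\circ\eta_{11}$ has order dividing $2$ and in fact vanishes, which I would confirm by examining the first column of (\ref{pi-69}): the map $\eta_{11}^*:\bZ_4\{[\nu_5^2]\}\to\bZ_4\{[\sigma''']\}$ must be consistent with the already-computed left column, and a direct check of which element of $\bZ_4\{[\sigma''']\}$ can be hit will force $\eta_{11}^*[\nu_5^2]=0$.

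For part (2), I would read off the computation of $[C_{\eta_{10}},\s^5]_{(2)}$ from the bottom row of (\ref{pi-69}). That row is exact:
$$
\bZ_2\{\nu_5^2\}\xrightarrow{\ \eta_{11}^*=0\ }\bZ_2\{\sigma'''\}\xrightarrow{(\Sigma^7q_3)^*}[C_{\eta_{10}},\s^5]_{(2)}\xrightarrow{(\Sigma^7 i')^*}\bZ_2\{\nu_5\eta_8^2\}\xrightarrow{\eta_{10}^*}0.
$$
Since $\eta_{11}^*=0$ here, the map $(\Sigma^7q_3)^*$ is injective on $\bZ_2\{\sigma'''\}$, and $(\Sigma^7 i')^*$ is surjective onto $\bZ_2\{\nu_5\eta_8^2\}$, giving a short exact sequence $0\to\bZ_2\to[C_{\eta_{10}},\s^5]_{(2)}\to\bZ_2\to0$. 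The group therefore has order $4$ and is either $\bZ_4$ or $\bZ_2\oplus\bZ_2$; the asserted answer $\bZ_4\{\overline{\nu_5\eta_8^2}\}$ requires ruling out the split case. I would do this by producing an element of order $4$: take the extension $\overline{\nu_5\eta_8^2}$ mapping to the generator $\nu_5\eta_8^2$ under $(\Sigma^7 i')^*$, and show $2\,\overline{\nu_5\eta_8^2}$ is the nonzero image of $\sigma'''$ rather than $0$. The standard device is a coextension/Toda-bracket identity: doubling the class picks up the bracket $\langle\nu_5\eta_8^2,\eta_{10},\,\cdot\,\rangle$ and relates it to $\sigma'''$ via the attaching-map data, exactly as in the cited \cite[Proposition 3.5]{KMNST}.

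\textbf{Main obstacle.} The genuinely delicate point is the non-splitting in part (2): the order-$4$ claim is not forced by the exact sequence alone and must come from a secondary relation in $\pi_*(\s^5)_{(2)}$ linking $\nu_5\eta_8^2$ and $\sigma'''$ through $\eta$-multiplication, which is precisely the kind of extension problem that the paper elsewhere (e.g.\ the $[C_{\eta_8},\SU(3)]$ case) had to defer to Hamanaka--Kono. Since part (2) is explicitly attributed to \cite[Proposition 3.5]{KMNST}, I would lean on that reference for the extension, and concentrate my own effort on verifying part (1) carefully, as (1) is the input that makes the bottom-left corner $\eta_{11}^*=0$ of (\ref{pi-69}) legitimate in the first place.
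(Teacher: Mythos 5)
Your handling of part (2) matches the paper: both simply defer to \cite[Proposition 3.5]{KMNST}, which is fine. The genuine gap is in part (1) --- precisely the part you chose to carry out yourself. Since $2\eta_{11}=0$, the element $[\nu_5^2]\circ\eta_{11}$ is either $0$ or the unique element of order $2$ in $\pi_{12}(\SU(3))_{(2)}=\bZ_4\big\{[\sigma''']\big\}$, namely $2[\sigma''']=i_*\mu_3$ (Table 1). Every test you propose is blind to this dichotomy. Both candidates lie in $\Ker p_*=\image i_*$, so computing $p_*\big([\nu_5^2]\circ\eta_{11}\big)=\nu_5^2\eta_{11}=0$ eliminates nothing. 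Your relation $2\big([\nu_5^2]\circ\eta_{11}\big)=i_*(\varepsilon_3\eta_{11})$ is vacuous: the left side is automatically $0$ (again because $2\eta_{11}=0$), and the right side also vanishes since $i_*(\varepsilon_3\eta_{11})=(i_*\varepsilon_3)\circ\eta_{11}=2[\nu_5^2]\circ\eta_{11}=0$ --- even though $\varepsilon_3\eta_{11}\neq0$ in $\pi_{12}(\s^3)$, it is killed by $i_*$. Finally, commutativity of (\ref{pi-69}) yields only $\eta_{11}^*[\nu_5^2]\in\Ker p_*=\{0,\,2[\sigma''']\}$, which is where we started. There is no ``direct check of which element of $\bZ_4\{[\sigma''']\}$ can be hit'': both values are consistent with every map in that diagram, so no chase in (\ref{pi-69}) can force the vanishing.

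What is actually needed, and what the paper does, is a composition argument one dimension higher. Assume $[\nu_5^2]\circ\eta_{11}=2[\sigma''']$. Composing with $\sigma_{12}$ and using $\eta_{11}\sigma_{12}=\sigma_{11}\eta_{18}$ (\cite[Lemma 6.4]{T3}) gives $[\nu_5^2]\circ\sigma_{11}\circ\eta_{18}=2\big([\sigma''']\circ\sigma_{12}\big)\neq0$, since $[\sigma''']\sigma_{12}$ has order $4$ in $\pi_{19}(\SU(3))$ by Table 1. On the other hand, writing $[\nu_5^2]\circ\sigma_{11}=a\cdot i_*\overline{\varepsilon}_3+b\cdot[\nu_5\eta_8\mu_9]$ in $\pi_{18}(\SU(3))$, the sphere relations $\nu_5\circ\Sigma\sigma'=2(\nu_5\sigma_8)$ (\cite[(7.16)]{T3}), $\sigma'\nu_{14}=x\cdot\nu_7\sigma_{10}$ with $x$ odd (\cite[(7.19)]{T3}), and $2\pi_{18}(\s^5)_{(2)}=0$ force $\nu_5^2\sigma_{11}=0$, hence $b$ even; then $[\nu_5^2]\circ\sigma_{11}\circ\eta_{18}=a\cdot i_*(\overline{\varepsilon}_3\eta_{18})=a\cdot i_*(\eta_3\overline{\varepsilon}_4)=0$ because $i_*\eta_3\in\pi_4(\SU(3))=0$, a contradiction. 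Nothing of this kind appears in your proposal, and without it the dichotomy is unresolved. One further correction: your closing claim that (1) ``is the input that makes the bottom-left corner $\eta_{11}^*=0$ of (\ref{pi-69}) legitimate'' reverses the logic. That corner is the sphere-level fact $\nu_5^2\eta_{11}=0$, immediate from $\nu_8\eta_{11}\in\pi_{12}(\s^8)=0$ (\cite{T3}), and is independent of (1); what (1) settles is $\eta_{11}^*$ on the middle row of (\ref{pi-69}), and that requires the argument above rather than a diagram chase.
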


Before proving this proposition, we prove (\ref{pi-70}) by using it. 
By Proposition \ref{eta10}, we have the following commutative diagram with exact rows and columns. 
$$
\begin{CD}
@. \mathbb{Z}_2\{\mu_3\}@>(\Sigma^7q_3)^*>\cong> \mathbb{Z}_2\big\{(\Sigma^7q_3)^*\mu_3\big\} @. @.\\
@. @VV i_*V @VV i_*V @. @.\\
0@>>>\mathbb{Z}_4\big\{[\sigma''']\big\}@>(\Sigma^7q_3)^*>>[C_{\eta_{10}},\SU(3)]_{(2)}@>(\Sigma^7 i')^*>>
\mathbb{Z}_2\big\{[\nu_5\eta_8^2]\big\}@>>>0\\
@. @VV p_*V @VV p_*V @V\cong V p_*V @.\\
0@>>>\mathbb{Z}_2\{\sigma'''\}@>(\Sigma^7q_3)^*>>\mathbb{Z}_4\Big\{\overline{\nu_5\eta_8^2}\Big\}
@>(\Sigma^7 i')^*>>\mathbb{Z}_2\{\nu_5\eta_8^2\}@>>>0
\end{CD}
$$
Hence $[C_{\eta_{10}},\SU(3)]_{(2)}$ is isomorphic to $\mathbb{Z}_8$ or $\mathbb{Z}_4\oplus\mathbb{Z}_2$. 
To induce a contradiction, assume it is $\mathbb{Z}_4\oplus\mathbb{Z}_2$. 
Then 
$$
[C_{\eta_{10}},\SU(3)]_{(2)}=\mathbb{Z}_4\Big\{\overline{[\nu_5\eta_8^2]}\Big\}
\oplus\mathbb{Z}_2\Big\{\overline{[\nu_5\eta_8^2]}-(\Sigma^7q_3)^*[\sigma''']\Big\}
$$
since $p_*\overline{[\nu_5\eta_8^2]}$ generates $[C_{\eta_{10}},\s^5]_{(2)}$. 
We have $i_*(\Sigma^7q_3)^*\mu_3=2(\Sigma^7q_3)^*[\sigma''']=2\,\overline{[\nu_5\eta_8^2]}$. 
Hence the cokernel of the second $i_*$ which is isomorphic to $[C_{\eta_{10}},\s^5]_{(2)}$ is 
$\mathbb{Z}_2\oplus\mathbb{Z}_2$. 
This contradicts Proposition \ref{eta10} (2).  
Therefore we obtain (\ref{pi-70}).

\medskip

\noindent{\it Proof of Proposition \ref{eta10}. }
The assertion (2) is proved in \cite[Proposition 3.5 (4)]{KMNST}. 
We prove (1) as follows. 
Since $\eta_{11}$ is of order $2$, $[\nu_5^2]\circ\eta_{11}$ is $0$ or $2[\sigma''']$. 
To induce a contradiction, assume $[\nu_5^2]\circ\eta_{11}=2[\sigma''']$. 
Then, by \cite[Lemma 6.4]{T3} and Table 1, we have
\begin{equation}\label{pi-7}
[\nu_5^2]\circ\sigma_{11}\circ\eta_{18}=[\nu_5^2]\circ\eta_{11}\circ\sigma_{12}=2([\sigma''']\circ\sigma_{12})\ne 0.
\end{equation}
By Table 1, we can write $[\nu_5^2]\circ\sigma_{11}=a\cdot i_*\overline{\varepsilon}_3+ b\cdot[\nu_5\eta_8\mu_9]$ $(a,b\in\bZ)$. 
Then 
$$
\nu_5^2\sigma_{11}=p_*([\nu_5^2]\circ\sigma_{11})=b\cdot \nu_5\eta_8\mu_9.
$$
By \cite[(7.19)]{T3}, $\sigma'\nu_{14}=x\cdot \nu_7\sigma_{10}$ with $x$ odd. 
Hence 
$$
\nu_5\circ\Sigma\sigma'\circ\nu_{15}=\nu_5\circ x\cdot\nu_8\circ\sigma_{11}=\nu_5^2\sigma_{11}.
$$
On the other hand, $\nu_5\circ\Sigma\sigma'=2(\nu_5\sigma_8)$ by \cite[(7.16)]{T3}. 
Hence $\nu_5\circ\Sigma\sigma'\circ\nu_{15}=0$, since $2\pi_{18}(\s^5)_{(2)}=0$ by \cite{T3}. 
Thus $\nu_5^2\sigma_{11}=0$ so that $b$ is even and $[\nu_5^2]\circ\sigma_{11}=a\cdot i_*\overline{\varepsilon}_3$.
We then have
$$
[\nu_5^2]\circ\sigma_{11}\circ\eta_{18}=a\cdot i_*\big(\overline{\varepsilon}_3\eta_{18}\big)
=a\cdot i_*(\eta_3\overline{\varepsilon}_4)=a\cdot (i_*\eta_3\circ\overline{\varepsilon}_4)=0,
$$
since $i_*\eta_3\in\pi_4(\SU(3))=0$. 
This contradicts (\ref{pi-7}). 
Therefore $[\nu_5^2]\circ\eta_{11}=0$. 
\qed

\vspace{0.3cm}
\subsection{$[C_{\eta_{11}}, \SU(3)]_{(2)}$}

By Table 1 and Proposition \ref{eta10} (1), we have the following commutative diagram with exact rows and columns: 
\begin{equation}\label{eta11-0}
\begin{CD}
\mathbb{Z}_4\big\{[\sigma''']\big\} @>\eta_{12}^*>>\mathbb{Z}_2\{ i_*\varepsilon'\} @>(\Sigma^8q_3)^*>> 
[C_{\eta_{11}}, \SU(3)]_{(2)} @>(\Sigma^8i')^*>> \mathbb{Z}_4\big\{[\nu_5^2]\big\} @>>>0\\
@VVV @VV p_*V @VV p_*V @VVV @.\\
\bZ_2\{\sigma'''\}@>\eta_{12}^*>>\bZ_2\{\varepsilon_5\}@>(\Sigma^8 q_3)^*>>[C_{\eta_{11}},\s^5]@>(\Sigma^8 i')^*>>\bZ_2\{\nu_5^2\}@>>>0\\
@VVV @VV\partial V @VV\partial V @VVV @.\\
\bZ_2\{\varepsilon_3\}@>\eta_{11}^*>>\bZ_2^2\{\mu_3, \eta_3\varepsilon_4\}@>(\Sigma^7q_3)^*>>[C_{\eta_{10}},\s^3]_{(2)}@>>>0 @.
\end{CD}
\end{equation}
The purpose of this subsection is to prove 
\begin{equation}\label{eta11-1}
[C_{\eta_{11}},\SU(3)]_{(2)}=\bZ_8\big\{\overline{[\nu_5^2]}\big\},\quad 4\cdot\overline{[\nu_5^2]}=(\Sigma^8q_3)^*i_*\varepsilon'.
\end{equation}

We need two lemmas. 

\begin{lemma}\label{eta12}
\begin{enumerate}
\item $[\sigma''']\circ\eta_{12}=0$.
\item $($\cite[Proposition 3.6]{KMNST}$)$\quad $[C_{\eta_{11}},\s^5]=\bZ_4\big\{p_*\overline{[\nu_5^2]}\big\},\ 
2\cdot p_*\overline{[\nu_5^2]}=(\Sigma^8 q_3)^*\varepsilon_5$.
\end{enumerate}
\end{lemma}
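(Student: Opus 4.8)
The plan is to mimic the argument already used for Proposition \ref{eta10} (1), namely to suppose the composite is nonzero, then compose with a further element of $\pi_*(\s^?)$ to derive an equality that contradicts a known fact from Table 1 or from Toda's tables. Since $\eta_{12}$ has order $2$, and by Table 1 we know $[\sigma''']$ generates a $\bZ_4$ summand with $2[\sigma''']=i_*\mu_3$, the element $[\sigma''']\circ\eta_{12}$ lives in $\pi_{13}(\SU(3))_{(2)}=\bZ_2\{i_*\varepsilon'\}$. So a priori it is either $0$ or $i_*\varepsilon'$, and I would aim to rule out the nontrivial value.

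First I would try the most direct route: apply $p_*$. Since $p_*[\sigma''']=\sigma'''\in\pi_{12}(\s^5)$ and $p_*(i_*\varepsilon')=0$, the relation $[\sigma''']\circ\eta_{12}=i_*\varepsilon'$ forces $\sigma'''\circ\eta_{12}=0$ in $\pi_{13}(\s^5)$, which is automatically consistent and gives nothing — so applying $p_*$ alone is not decisive. The real test must come from further composition or from the structure of $[\sigma''']$. I would therefore post-compose with a suitable map that is injective on the relevant summand. A natural candidate, following the template of \eqref{pi-7}, is to compose with $\varepsilon$- or $\mu$-type elements and compare with Table 1 entries in degrees $18$ or $19$; for instance, computing $[\sigma''']\circ\eta_{12}\circ(\text{some }\theta)$ two ways — once by associating the first two factors (which would vanish under the hypothesis only if $i_*\varepsilon'\circ\Sigma\theta=0$) and once via the relation $\eta_{12}\circ\theta$ reduced by Toda's relations — and showing the two computations are incompatible.

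The cleanest approach, however, exploits $2[\sigma''']=i_*\mu_3$. Then $2([\sigma''']\circ\eta_{12})=(i_*\mu_3)\circ\eta_{12}=i_*(\mu_3\eta_{12})$. Since $\mu_3\eta_{12}=\eta_3\mu_4$ by Toda and one can track whether $i_*(\eta_3\mu_4)$ vanishes (using $i_*\eta_3\in\pi_4(\SU(3))=0$, exactly as in the proof of Proposition \ref{eta10} (1)), I would get $2([\sigma''']\circ\eta_{12})=0$. This only says the composite has order dividing $2$, which we already knew, so I would instead push the contradiction through an element whose product with $i_*\varepsilon'$ is detectably nonzero. Concretely, I would examine $[\sigma''']\circ\eta_{12}\circ\eta_{13}$ versus $i_*\varepsilon'\circ\eta_{13}=i_*(\varepsilon'\eta_{13})$: using $\eta_{12}\eta_{13}=0$ (or the Toda relation $\eta^2\in 2\pi$) makes the left side vanish, while the right side is computed from Table 1 and $\varepsilon'\eta=\eta\varepsilon$-type relations, and any discrepancy rules out $[\sigma''']\circ\eta_{12}=i_*\varepsilon'$.

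The main obstacle I anticipate is pinning down the exact Toda relations in these specific stems — whether $\mu_3\eta_{12}$, $\varepsilon'\eta_{13}$, and the relevant triple products behave as needed, and ensuring the element I post-compose with genuinely detects $i_*\varepsilon'$ rather than landing in a group where $i_*\varepsilon'$ already dies. Given that part (2) is cited from \cite{KMNST} and part (1) is the only genuinely new claim, I expect the authors likewise reduce to a single Toda-relation computation, most plausibly the $2[\sigma''']=i_*\mu_3$ route combined with $i_*\eta_3=0$, so I would commit to that line first and fall back on the post-composition contradiction only if the order bound proves insufficient.
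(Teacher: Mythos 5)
Your proposal does not contain a working proof of part (1). The route you say you would ``commit to first'' (using $2[\sigma''']=i_*\mu_3$ together with $i_*\eta_3=0$) only bounds the order of $[\sigma''']\circ\eta_{12}$ by $2$, which, as you yourself note, is vacuous because the target $\pi_{13}(\SU(3))_{(2)}=\bZ_2\{i_*\varepsilon'\}$ already has exponent $2$. Your fallback is then built on false relations: you propose to post-compose with $\eta_{13}$ and kill the left-hand side ``using $\eta_{12}\eta_{13}=0$ (or the Toda relation $\eta^2\in 2\pi$).'' Neither holds: $\eta_{12}\circ\eta_{13}=\eta_{12}^2$ is stably nontrivial and generates a $\bZ_2$, and $\eta^2$ is not divisible by $2$. (The paper itself trades on exactly this nonvanishing --- elements such as $[\nu_5\eta_8^2]$ and the relation $\eta^2\sigma\neq 0$ in the $G_2$ section.) So post-composition with $\eta_{13}$ does not annihilate $[\sigma''']\circ\eta_{12}$, and no contradiction results. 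Your remaining suggestions (compositions with unspecified $\varepsilon$- or $\mu$-type elements) are not carried out and would face the same difficulty: you must produce an element $\theta$ with $\eta_{12}\circ\theta=0$ but $i_*(\varepsilon'\circ\theta)\neq 0$, and you never exhibit one.

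The paper's actual argument is of a completely different kind and is the idea you are missing: it stabilizes along the inclusions $\SU(3)\subset\SU(4)\subset\SU(5)$. By Toda's computation $\pi_{12}(\SU(5))=\bZ_8\oplus\bZ_{45}$, the image ${i_{3,5}}_*[\sigma''']$ equals $2g$ for a generator $g$ of the $2$-primary part, i.e.\ $[\sigma''']$ \emph{becomes divisible by $2$} after stabilization (it is not divisible inside $\SU(3)$, which is precisely why compositions internal to $\SU(3)$ keep stalling). Then
$$
{i_{3,5}}_*\big([\sigma''']\circ\eta_{12}\big)=(2g)\circ\eta_{12}=g\circ 2\eta_{12}=0,
$$
and since ${i_{3,4}}_*$ is bijective on $\pi_{13}(\SU(3))_{(2)}$ by Mimura--Toda and ${i_{4,5}}_*$ is injective on $\pi_{13}(\SU(4))$ (because $\pi_{13}(\s^9)=\pi_{14}(\s^9)=0$), the map ${i_{3,5}}_*$ is injective there, forcing $[\sigma''']\circ\eta_{12}=0$. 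Note also that the paper does not merely cite \cite{KMNST} for part (2): it supplies a proof (triviality of the relevant boundary $\partial$, an order count using $\sigma'''\eta_{12}=0$, and exclusion of $\bZ_2^2$ via surjectivity of $p_*$), whereas your proposal leaves (2) untouched; that is a lesser issue given the citation in the statement, but worth flagging.
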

\begin{proof}
Consider the following commutative diagram. 
$$
\begin{CD}
\pi_{12}(\SU(3)) @>{i_{3,4}}_*>\cong> \pi_{12}(\SU(4)) @>{i_{4,5}}_*>> \pi_{12}(\SU(5))\\
@VV\eta_{12}^*V @VV\eta_{12}^*V @VV\eta_{12}^*V \\
\pi_{13}(\SU(3))_{(2)} @>{i_{3,4}}_*>> \pi_{13}(\SU(4)) @>{i_{4,5}}_*>\cong> \pi_{13}(\SU(5))
\end{CD}
$$
Here $i_{k,l}:\SU(k)\to\SU(l)$ is the inclusion map. 
Recall from \cite[Theorem 4.4]{T2} that $\pi_{12}(\SU(5))=\bZ_8\oplus\bZ_{45}$. 
Then the first ${i_{3,4}}_*$ is bijective and the second ${i_{3,4}}_*$ is injective by \cite{MT}. 
Since $\pi_{13}(\s^9)=\pi_{14}(\s^9)=0$ by \cite{T3}, the first ${i_{4,5}}_*$ is injective and the second ${i_{4,5}}_*$ is bijective. 
Let $g$ denote a generator of the $2$-primary part of $\pi_{12}(\SU(5))$ satisfying ${i_{3,5}}_*[\sigma''']=2g$. 
Then 
$$
{i_{3,5}}_*\eta_{12}^*[\sigma''']=\eta_{12}^*{i_{3,5}}_*[\sigma''']=\eta_{12}^*(2g)=g\circ2\eta_{12}=0.
$$
Hence $\eta_{12}^*[\sigma''']=0$ and we obtain (1). 

Since no precise proof of (2) is in \cite{KMNST}, we give a proof of (2). 
We firstly claim that the second $p_*$ of (\ref{eta11-0}) is surjective, that is, the second $\partial$ of (\ref{eta11-0}) is trivial. 
We have
$$
\partial\varepsilon_5=\partial\iota_5\circ\varepsilon_4=\eta_3\varepsilon_4=\varepsilon_3\eta_{11}=\eta_{11}^*\varepsilon_3
$$
so that 
$$
\partial(\Sigma^8q_3)^*\varepsilon_5=(\Sigma^7 q_3)^*\partial\varepsilon_5=(\Sigma^7q_3)^*\eta_{11}^*\varepsilon_3=0.
$$
Of course $\partial p_*\overline{[\nu_5^2]}=0$. 
Hence the second $\partial$ of (\ref{eta11-0}) is trivial, since $[C_{\eta_{11}},\s^5]$ is generated by $(\Sigma^8q_3)^*\varepsilon_5$ and 
$p_*\overline{[\nu_5^2]}$. 

By \cite[(7.4)]{T3}, $\sigma'''\eta_{12}=0$. 
Hence, by the second row of (\ref{eta11-0}), the order of  $[C_{\eta_{11}},\s^5]$ is $4$. 
To induce a contradiction, assume $[C_{\eta_{11}},\s^5]\cong\bZ_2^2$, that is, 
$[C_{\eta_{11}},\s^5]=\bZ_2^2\big\{(\Sigma^8q_3)^*\varepsilon_5, p_*\overline{[\nu_5^2]}\big\}$. 
Then the surjectivity of $p_*:[C_{\eta_{11}},\SU(3)]_{(2)}\to[C_{\eta_{11}},\s^5]$ implies that 
$[C_{\eta_{11}},\SU(3)]_{(2)}$ is generated by at least two elements, that is, it must be that 
$[C_{\eta_{11}},\SU(3)]_{(2)}=\bZ_2\big\{(\Sigma^8q_3)^*i_*\varepsilon'\big\}\oplus\bZ_4\big\{\overline{[\nu_5^2]}\big\}$. 
But this is impossible, since $p_*(\Sigma^8q_3)^*i_*\varepsilon'=(\Sigma^8q_3)^*p_*i_*\varepsilon'=0$. 
Therefore $[C_{\eta_{11}},\s^5]=\bZ_4\big\{p_*\overline{[\nu_5^2]}\big\}$ with 
$2\cdot p_*\overline{[\nu_5^2]}=(\Sigma^8q_3)^*\varepsilon_5$. 
\end{proof}

We use the following fibration:
$$
\begin{CD}
\SU(3)@>\hat{i}>>G_2@>\hat{p}>>\s^6
\end{CD}
$$
We use notations and results of \cite{M} freely. 
By \cite{T3,M} and Table 1, we have the following commutative diagram with exact rows and columns where all groups are localized at $2$:
\begin{equation}\label{eta11-2}
\xymatrix{
  &\bZ_8\big\{\langle\overline{\nu}_6+\varepsilon_6\rangle\big\}\oplus\bZ_2\big\{\hat{i}_*[\nu_5^2]\nu_{11}\big\} \ar[r]^-{(\Sigma^9q_3)^*}_-{\cong} \ar[d]^-{\hat{p}_*} &[C_{\eta_{12}},G_2] \ar[d]^-{\hat{p}_*}\\
\bZ_4\{\sigma''\} \ar[r]^-{\eta_{13}^*} \ar[d]^-{\partial} &\bZ_8\{\overline{\nu}_6\}\oplus\bZ_2\{\varepsilon_6\} \ar[r]^-{(\Sigma^9q_3)^*} \ar[d]^-{\partial} &[C_{\eta_{12}},\s^6] \ar@{->>}[r]^-{(\Sigma^9 i')^*} \ar[d]^-{\partial} &\bZ_2\{\nu_6^2\} \ar[d]^-{\partial} \\
\bZ_4\big\{[\sigma''']\big\}\ar[r]^-{\eta_{12}^*=0} &\bZ_2\{i_*\varepsilon'\} \ar[r]^-{(\Sigma^8q_2)^*} \ar[d] &[C_{\eta_{11}},\SU(3)] \ar@{->>}[r]^-{(\Sigma^8i')^*} \ar[d]^-{\hat{i}_*} &\bZ_4\big\{[\nu_5^2]\big\} \ar[d]^-{\hat{i}_*}\\
 &0 \ar[r] &[C_{\eta_{11}},G_2]\ar[r]^-{(\Sigma^8i')^*}_-{\cong} &\bZ_2\big\{\hat{i}_*[\nu_5^2]\big\}\oplus\bZ_{(2)} 
}
\end{equation}
Here we have used results of \cite{M} that $\pi_{12}(G_2)=\pi_{13}(G_2)=0$. 
We need

\begin{lemma}\label{eta11-3}
\begin{enumerate}
\item $($\cite[Proposition 6.3]{M}$)$\quad $\partial\overline{\nu}_6=\partial\varepsilon_6=i_*\varepsilon'.$
\item $($\cite[Proposition 3.6]{KMNST}$)$\quad $[C_{\eta_{12}},\s^6]_{(2)}=\bZ_4\big\{(\Sigma^9q_3)^*\overline{\nu}_6\big\}\oplus\bZ_4\big\{\Sigma p_*\overline{[\nu_5^2]}\big\}$\  and\newline $2\cdot\Sigma p_*\overline{[\nu_5^2]}=(\Sigma^9q_3)^*\varepsilon_6.$
\end{enumerate}
\end{lemma}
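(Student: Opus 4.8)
The plan is to read both statements off the homotopy exact sequence of the fibration $\SU(3)\xrightarrow{\hat i}G_2\xrightarrow{\hat p}\s^6$ already displayed in (\ref{eta11-2}), supplemented by the suspension of the $\s^5$-calculation of Lemma \ref{eta12}. In the paper these are precisely \cite[Proposition 6.3]{M} and \cite[Proposition 3.6]{KMNST}, so ultimately I would cite those; here is how each is obtained.

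For (1), I would examine the boundary $\partial\colon\pi_{14}(\s^6)\to\pi_{13}(\SU(3))$ of the fibration. Since $\pi_{13}(G_2)=0$ (used already in (\ref{eta11-2})), exactness of $\pi_{14}(\s^6)\xrightarrow{\partial}\pi_{13}(\SU(3))\xrightarrow{\hat i_*}\pi_{13}(G_2)=0$ shows $\partial$ is onto; and $2$-locally $\pi_{13}(\SU(3))_{(2)}=\bZ_2\{i_*\varepsilon'\}$ by Table 1 while $\pi_{14}(\s^6)_{(2)}=\bZ_8\{\overline\nu_6\}\oplus\bZ_2\{\varepsilon_6\}$, so at least one of $\partial\overline\nu_6,\partial\varepsilon_6$ equals $i_*\varepsilon'$. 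To see that both do, I would compute $\partial$ on these suspension classes by composition with the characteristic element $\partial\iota_6\in\pi_5(\SU(3))$, namely $\partial\overline\nu_6=\partial\iota_6\circ\overline\nu_5$ and $\partial\varepsilon_6=\partial\iota_6\circ\varepsilon_5$, and then use the relation $\overline\nu_6+\varepsilon_6=\eta_6\sigma_7$ of \cite{T3} (together with $2\varepsilon_6=0$) to conclude $\partial(\overline\nu_6-\varepsilon_6)=0$. Combined with surjectivity this forces $\partial\overline\nu_6=\partial\varepsilon_6=i_*\varepsilon'$, which is \cite[Proposition 6.3]{M}.

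For (2), I would first use $C_{\eta_{12}}=\Sigma C_{\eta_{11}}$: the suspension homomorphism carries the generator of Lemma \ref{eta12}(2) to $\Sigma p_*\overline{[\nu_5^2]}$, and suspending $2\,p_*\overline{[\nu_5^2]}=(\Sigma^8q_3)^*\varepsilon_5$ gives $2\,\Sigma p_*\overline{[\nu_5^2]}=(\Sigma^9q_3)^*\varepsilon_6$, using $\Sigma\varepsilon_5=\varepsilon_6$. The remaining summand $\bZ_4\{(\Sigma^9q_3)^*\overline\nu_6\}$ I would extract from the second row of (\ref{eta11-2}), i.e.\ the exact sequence
$$
\bZ_4\{\sigma''\}\xrightarrow{\eta_{13}^*}\bZ_8\{\overline\nu_6\}\oplus\bZ_2\{\varepsilon_6\}\xrightarrow{(\Sigma^9q_3)^*}[C_{\eta_{12}},\s^6]_{(2)}\xrightarrow{(\Sigma^9i')^*}\bZ_2\{\nu_6^2\}\to0.
$$
Here the $\eta$-relation $\sigma''\eta_{13}=4\overline\nu_6$ of \cite{T3} shows that $\eta_{13}^*$ has image $\{0,4\overline\nu_6\}$, so $(\Sigma^9q_3)^*\overline\nu_6$ has order $4$; since $\nu_6^2\eta_{12}=0$ the class $\nu_6^2$ lifts and the sequence is short exact, splitting the group as $\bZ_4\oplus\bZ_4$ with the stated generators. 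This is \cite[Proposition 3.6]{KMNST}.

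The main obstacle is in (1): mere surjectivity of $\partial$ only shows that one of the two boundary values is $i_*\varepsilon'$, and pinning down both simultaneously needs the precise identity $\overline\nu_6+\varepsilon_6=\eta_6\sigma_7$ and the evaluation of $\partial\iota_6$, that is, Mimura's detailed understanding of the connecting map of $\SU(3)\to G_2\to\s^6$. In (2) the delicate point is resolving the extension and the order-$4$ relation, which rests on correctly evaluating the $\eta$-products in $\pi_*(\s^6)$. Since both facts are already proved in \cite{M} and \cite{KMNST} respectively, in the paper we simply quote them.
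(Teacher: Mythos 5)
Your treatment of part (2) --- the only part the paper actually proves --- is essentially the paper's own argument: suspend the relation $2\,p_*\overline{[\nu_5^2]}=(\Sigma^8q_3)^*\varepsilon_5$ of Lemma \ref{eta12}\,(2) to get $2\,\Sigma p_*\overline{[\nu_5^2]}=(\Sigma^9q_3)^*\varepsilon_6$, use $\sigma''\eta_{13}=4\overline{\nu}_6$ to see that the image of $\eta_{13}^*$ is $\{0,4\overline{\nu}_6\}$, and then resolve the extension. One caution on wording: the resulting short exact sequence $0\to\bZ_4\oplus\bZ_2\to[C_{\eta_{12}},\s^6]_{(2)}\to\bZ_2\{\nu_6^2\}\to0$ does \emph{not} split (a split extension would give $\bZ_4\oplus\bZ_2\oplus\bZ_2$); what forces $\bZ_4\oplus\bZ_4$ is precisely that the lift $\Sigma p_*\overline{[\nu_5^2]}$ of $\nu_6^2$ has order $4$ with $2\cdot\Sigma p_*\overline{[\nu_5^2]}=(\Sigma^9q_3)^*\varepsilon_6$. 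You state these ingredients, so this is phrasing rather than a gap.

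The sketch you offer for part (1), however, is genuinely wrong, and you should not believe it even as a heuristic. First, $\overline{\nu}_6$ is \emph{not} a suspension: $\pi_{13}(\s^5)_{(2)}=\bZ_2\{\varepsilon_5\}$ (this is visible in the suspension column of the paper's own diagram in the proof of this very lemma), so there is no element ``$\overline{\nu}_5$'', and the rule $\partial(\iota_6\circ\Sigma\beta)=\partial\iota_6\circ\beta$ computes $\partial\varepsilon_6$ but says nothing about $\partial\overline{\nu}_6$; indeed $\overline{\nu}_6$ has order $8$ exactly because it is born on $\s^6$ (its Hopf invariant is $\nu_{11}$). Second, the relation ``$\overline{\nu}_6+\varepsilon_6=\eta_6\sigma_7$'' is not in \cite{T3} and cannot hold in any form $\overline{\nu}_6+\varepsilon_6=\eta_6\circ x$ with $x\in\pi_{14}(\s^7)=\bZ_{120}\{\sigma'\}$: since $\Sigma^2\sigma'=\pm2\sigma_9$ and $2\eta_8=0$, any such composite is killed by double suspension, whereas $\Sigma^2(\overline{\nu}_6+\varepsilon_6)=\overline{\nu}_8+\varepsilon_8\ne 0$ (stably $\overline{\nu}+\varepsilon=\eta\sigma\ne0$, the very fact used later in the $G_2$ section). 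On $\s^6$ the relevant Toda relation is $\sigma''\eta_{13}=4\overline{\nu}_6$, which you yourself use in (2) and which is \emph{not} what you need here. Consequently your argument for (1) proves only the surjectivity of $\partial$, i.e., that at least one of $\partial\overline{\nu}_6$, $\partial\varepsilon_6$ equals $i_*\varepsilon'$; but the paper needs specifically $\partial\overline{\nu}_6=i_*\varepsilon'$ (this is what is used in the proof of (\ref{eta11-1})), and pinning that down is the actual content of \cite[Proposition 6.3]{M}. Your fallback of quoting \cite{M}, as the paper does, is the correct move; the purported derivation should be discarded, as it cannot be repaired along the lines you propose.
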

\begin{proof} 
We give a proof of (2), because our notations are different from ones in \cite{KMNST}. 
Consider the following commutative diagram with exact rows:
$$
\xymatrix{
\bZ_2\{\sigma'''\} \ar[r]^-{\eta_{12}^*=0}  &\bZ_2\{\varepsilon_5\} \ar[r]^-{(\Sigma^8q_3)^*} \ar[d]^-{\Sigma} &[C_{\eta_{11}},\s^5]_{(2)}\ar[r]^-{(\Sigma^8i')^*} \ar[d]^-{\Sigma} &\bZ_2\{\nu_5^2\} \ar[r] \ar[d]^-{\Sigma}_-{\cong} & 0\\
\bZ_4\{\sigma''\}\ar[r]^-{\eta_{13}^*}&\bZ_8\{\overline{\nu}_6\}\oplus\bZ_2\{\varepsilon_6\}\ar[r]^-{(\Sigma^9q_3)^*} &[C_{\eta_{12}},\s^6]_{(2)}\ar[r]^-{(\Sigma^9i')^*}&\bZ_2\{\nu_6^2\} \ar[r] & 0
}
$$
By Lemma \ref{eta12} (2), we have
\begin{equation}\label{eta11-7}
2\Sigma p_*\overline{[\nu_5^2]}=(\Sigma^9q_3)^*\Sigma\varepsilon_5=(\Sigma^9q_3)^*\varepsilon_6.
\end{equation}
We have $\eta_{13}^*\sigma''=4\cdot\overline{\nu}_6$ by \cite[(7.4)]{T3} so that we have the following short exact sequence:
$$
\begin{CD}
0@>>>\bZ_4\big\{(\Sigma^9q_3)^*\overline{\nu}_6\big\}\oplus\bZ_2\big\{(\Sigma^9q_3)^*\varepsilon_6\big\}@>>>[C_{\eta_{12}},\s^6]_{(2)} 
@>(\Sigma^9i')^*>>\bZ_2\{\nu_6^2\}@>>>0
\end{CD}
$$
Thus the order of $\Sigma p_*\overline{[\nu_5^2]}$ is $4$ by (\ref{eta11-7}), and 
we obtain (2) by the above exact sequence, since $(\Sigma^9 i')^*\Sigma p_*\overline{[\nu_5^2]}=\nu_6^2$.
\end{proof}

\medskip

\noindent{\it Proof of (\ref{eta11-1}). }
We have
$$
0=\partial \hat{p}_*(\Sigma^9q_3)^*\langle\overline{\nu}_6+\varepsilon_6\rangle
=\partial(\Sigma^9q_3)^*(\overline{\nu}_6+\varepsilon_6)
=\partial(\Sigma^9q_3)^*\overline{\nu}_6 + 2\cdot\partial\Sigma p_*\overline{[\nu_5^2]},
$$
where the last equality follows from (\ref{eta11-7}). 
Hence
$$
-2\cdot\partial\Sigma p_*\overline{[\nu_5^2]}=\partial(\Sigma^9q_3)^*\overline{\nu}_6=(\Sigma^8q_3)^*\partial\overline{\nu}_6
=(\Sigma^8q_3)^*i_*\varepsilon' ,
$$
where the last equality follows from Lemma \ref{eta11-3} (1). 
Thus the order of $\partial\Sigma p_*\overline{[\nu_5^2]}$ is $4$. 
On the other hand, 
$$
(\Sigma^8i')^* (2\cdot\overline{ [\nu_5^2] })=2[\nu_5^2]=\partial\nu_6^2=\partial(\Sigma^9i')^*\Sigma p_*\overline{[\nu_5^2]}=(\Sigma^8i')^*\partial\Sigma p_*\overline{[\nu_5^2]}.
$$
Hence there exists an integer $x$ such that $2\cdot\overline{[\nu_5^2]}-\partial\Sigma p_*\overline{[\nu_5^2]}=x\cdot(\Sigma^8q_3)^*i_*\varepsilon'$. 
Thus $4\cdot\overline{[\nu_5^2]}=2\cdot\partial\Sigma p_*\overline{[\nu_5^2]}=(\Sigma^8q_3)^*i_*\varepsilon'$. 
Therefore the order of $\overline{[\nu_5^2]}$ is $8$, and we obtain (\ref{eta11-1}). 

\medskip

\vspace{0.2cm}
\section{$\pi_n\mathrm{map}_*(\Sp(2), \Sp(2))$}

In this section we compute $\pi_n\mathrm{map}_*(\Sp(2), \Sp(2))$. 
Let $f:\s^9\to \s^3\cup_{\omega}e^7$ be the attaching map of the top cell of $\Sp(2)$, that is, 
$\Sp(2)=\s^3\cup_{\omega}e^7\cup_f e^{10}$. 
The double suspension of $f$ is trivial, that is $\Sigma^2f = 0$, because $\Sigma^2f$ is an element of 
the homotopy group $\pi_{11}(\s^5 \cup_{\Sigma^2\omega}e^9)$ which is isomorphic to the stable group, 
while $f$ is a stably trivial element by \cite{bs}. 
Thus we obtain 
$$\Sigma^2 \Sp(2) \simeq \s^5 \cup_{\Sigma^2\omega}e^9 \vee \s^{12}.$$  
The $p$-components of the homotopy groups for $p \geq 5$ are easily obtained from the results in \cite{T3}, since if $p \geq 5$
$$\Sp(2)_{(p)} \simeq \s^3_{(p)} \times \s^7_{(p)}$$
and thus for $n\ge 1$
\begin{equation}
[\Sigma^n \Sp(2), \Sp(2)]_{(p)}  \cong \big(\pi_{n+3}(\s^3 \times \s^7) \oplus \pi_{n+7}(\s^3 \times \s^7) \oplus \pi_{n+10}(\s^3 \times \s^7)\big)_{(p)}.
\end{equation}
Hence we must compute $2$ and $3$ components of $[\Sigma^n \Sp(2), \Sp(2)]$ for $n\ge 1$. 
The following table shows the generators of 2 and 3 components. Here we use the same notation as before.  

\vspace{0.1cm}
\begin{center}
\begin{tabular}{|c|c|l|}
\hline
$n$ & \text{$2, 3$-{\it components}} & \text{\it generators} \\
\hline\tabtopsp{0.2cm}%
1   & $\mathbb{Z}_2^2  $ & ${\Sigma q}^*i_*\varepsilon_3,\ \overline{\overline{i_*\eta_3}} $ \\[2mm]
\hline\tabtopsp{0.2cm}%
2 & $\mathbb{Z}_2^3  $  & ${\Sigma^2q}^*i_*\mu_3,\ {\Sigma^2q}^*i_*(\eta_3\varepsilon_3),\ \overline{\overline{i_*\eta_3^2}}$\\[2mm]
\hline\tabtopsp{0.2cm}
3 & $\mathbb{Z}_2 \oplus \mathbb{Z}_4 \oplus \mathbb{Z}_8$ & ${\Sigma^3q}^*i_*(\eta_3\mu_4),\ {\Sigma^3q}^*([\nu_7] \nu_{10}),\  \overline{{\Sigma^3q_3}^*[\nu_7]}$ \\[2mm]
\hline\tabtopsp{0.2cm}
4 & $ \mathbb{Z} \oplus \mathbb{Z}_2 \oplus \mathbb{Z}_{16} \oplus \mathbb{Z}_3$ &  $\overline{\overline{3[12\iota_7]}},\  \overline{{\Sigma^4q_3}^*i_*\varepsilon_3},\ {\Sigma^4q}^*[2\sigma'],\ {\Sigma^4q}^*i_*\alpha_3(3) $\\[2mm]
\hline\tabtopsp{0.2cm}
5 & $\mathbb{Z}_2^3 $ & ${\Sigma^5q}^*[\sigma'\eta_{14}],\ \overline{{\Sigma^5q_3}^*i_*\mu_3},\  \overline{{\Sigma^5q_3}^*i_*(\eta_3\varepsilon_4)}$ \\[2mm]
\hline\tabtopsp{0.2cm}
6 & $\mathbb{Z}_2^4  $ & ${\Sigma^6q}^*([\sigma'\eta_{14}]\circ\eta_{15}),\ {\Sigma^6q}^*([\nu_7]\circ\nu_{10}^2),\  \overline{{\Sigma^6q_3}^*([\nu_7]\circ\nu_{10})},\ \overline{{\Sigma^6q_3}^*i_*(\eta_3\mu_4)} $ \\[2mm]
\hline\tabtopsp{0.2cm}
7 & $\mathbb{Z}_8 \oplus \mathbb{Z}_{32} \oplus \bZ_2 \oplus \mathbb{Z}_9  $ & ${\Sigma^7q}^*([\nu_7]\circ\sigma_{10}),\ \overline{\overline{2[\nu_7]}}, \ 2\cdot\overline{\overline{2[\nu_7]}}-z\cdot\overline{{\Sigma^7q_3}^*[2\sigma']},\ \overline{\overline{i_*\alpha_2(3)}}$\\[2mm]
\hline\tabtopsp{0.2cm}
8 & $\mathbb{Z}_2^3 \oplus \mathbb{Z}_8 \oplus \mathbb{Z}_9 $ & ${\Sigma^8q}^*i_*\bar\varepsilon_3,  \overline{\overline{i_*\varepsilon_3}}, \overline{{\Sigma^8q_3}^*[\sigma'\eta_{14}]}, {\Sigma^8q}^*[\zeta_7], {\Sigma^8q}^*[\alpha_3'(7)] $\\[2mm]
\hline
\end{tabular}
\end{center}
\begin{eqnarray*}
\text{Table 3: 2 and 3 components of}\ \pi_n\mathrm{map}_*(\Sp(2), \Sp(2))
\end{eqnarray*}
Here $z$ is an odd integer. 

As in the $\SU(3)$ case, we obtain the following lemma.
\begin{lemma}\label{sp-iso}
$[\Sigma^n \Sp(2), \Sp(2)] \cong \pi_{10+n}(\Sp(2)) \oplus [C_{\Sigma^n\omega}, \Sp(2)]$ for $n \geq 1$.
\end{lemma}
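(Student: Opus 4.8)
The plan is to follow the proof of Lemma~\ref{su-iso} verbatim, trading on the stable triviality of the attaching map $f$ of the top cell of $\Sp(2)$. The key input is the relation $\Sigma^2 f=0$ established above, which for every $n\ge 2$ gives $\Sigma^n f=\Sigma^{n-2}(\Sigma^2 f)=0$, so that the top cell splits off after $n$ suspensions:
$$
\Sigma^n\Sp(2)\simeq \Sigma^n C_\omega\vee\s^{n+10}=C_{\Sigma^n\omega}\vee\s^{n+10}.
$$
Applying $[-,\Sp(2)]$ to this wedge decomposition immediately yields
$$
[\Sigma^n\Sp(2),\Sp(2)]\cong[C_{\Sigma^n\omega},\Sp(2)]\oplus[\s^{n+10},\Sp(2)]=[C_{\Sigma^n\omega},\Sp(2)]\oplus\pi_{n+10}(\Sp(2)),
$$
which is the asserted isomorphism for $n\ge 2$.

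For $n=1$ the suspension $\Sigma f$ need not be null-homotopic, so the wedge splitting of $\Sigma\Sp(2)$ is not yet available and one suspension must be borrowed by passing to the classifying space, exactly as in the case $n=2$ of Lemma~\ref{su-iso}. Since $\Sp(2)$ is a Lie group we have $\Sp(2)\simeq\Omega B\Sp(2)$, and the loop--suspension adjunction gives
$$
[\Sigma\Sp(2),\Sp(2)]\cong[\Sigma\Sp(2),\Omega B\Sp(2)]\cong[\Sigma^2\Sp(2),B\Sp(2)].
$$
Now the already-established decomposition $\Sigma^2\Sp(2)\simeq\Sigma^2 C_\omega\vee\s^{12}$ applies, whence
$$
[\Sigma^2\Sp(2),B\Sp(2)]\cong[\Sigma^2 C_\omega,B\Sp(2)]\oplus[\s^{12},B\Sp(2)].
$$
I would then unwind the adjunction a second time on each summand: $[\s^{12},B\Sp(2)]=\pi_{12}(B\Sp(2))\cong\pi_{11}(\Sp(2))$, while $[\Sigma^2 C_\omega,B\Sp(2)]\cong[\Sigma C_\omega,\Omega B\Sp(2)]\cong[C_{\Sigma\omega},\Sp(2)]$. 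Combining these identifications recovers the formula $[\Sigma\Sp(2),\Sp(2)]\cong\pi_{11}(\Sp(2))\oplus[C_{\Sigma\omega},\Sp(2)]$, i.e.\ the case $n=1$.

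Once $\Sigma^2 f=0$ is in hand the argument is essentially formal, so the only genuinely delicate point is the case $n=1$: there the stable triviality of $f$ does not suffice to split $\Sigma\Sp(2)$, and one is forced to shift to $B\Sp(2)$ to gain the extra suspension. The remaining care is bookkeeping—checking that the loop--suspension adjunction and the splitting of maps out of a wedge are compatible with the group structures—but this is standard for the identifications $[\Sigma X,\Omega Y]\cong[\Sigma^2 X,Y]$ and for maps out of a wedge, so I expect no real obstruction beyond the $n=1$ shift.
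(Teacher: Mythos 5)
Your proof is correct and is essentially the paper's own argument: the paper proves this lemma by citing the proof of Lemma~\ref{su-iso}, which is exactly what you carry out—splitting $\Sigma^n\Sp(2)\simeq C_{\Sigma^n\omega}\vee\s^{n+10}$ for $n\ge 2$ from $\Sigma^2 f=0$, and handling $n=1$ by passing to $[\Sigma^2\Sp(2),B\Sp(2)]$ via the loop--suspension adjunction. Your write-up just makes explicit the details the paper leaves implicit.
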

\begin{proof}
The proof is similar to that of Lemma \ref{su-iso}.
\end{proof}

Hence it suffices for our purpose to determine $[C_{\Sigma^n\omega}, \Sp(2)]_{(2,3)}$, 
the $2$ and $3$ components of $[C_{\Sigma^n\omega}, \Sp(2)]$, for $n\ge 1$. 
We use the following results of Mimura-Toda \cite{MT}.
\vspace{0.3cm}
$$
{\small
\begin{array}{|c|c|c||c|c|c|} \hline
n & \pi_n\Sp(2) & \text{gen. of $2,3$-comp.} & n & \pi_n\Sp(2) & \text{gen. of $2,3$-comp.} \\ \hline
1,2,6,8,9 & 0 &  & 12 & \bZ_2\oplus\bZ_2 & i_*\mu_3,\ i_*\eta_3\varepsilon_3\\ \hline
3 & \bZ & i_*\iota_3 & 13 & \bZ_4\oplus\bZ_2 & [\nu_7]\circ\nu_{10},\ i_*\eta_3\mu_4 \\ \hline
4 & \bZ_2 & i_*\eta_3 & 14 & \bZ_{16}\oplus\bZ_3\oplus\bZ_{35} & [2\sigma'],\ i_*\alpha_3(3) \\ \hline
5 & \bZ_2 & i_*\eta_3^2 & 15 & \bZ_2 & [\sigma'\eta_{14}] \\ \hline
7 & \bZ & [12\iota_7] & 16 & \bZ_2\oplus\bZ_2 & [\sigma'\eta_{14}]\circ\eta_{15},\ [\nu_7]\circ\nu_{10}^2 \\ \hline
10 & \bZ_8\oplus\bZ_3\oplus\bZ_5 & [\nu_7],\ i_*\alpha_2(3) & 17 & \bZ_8\oplus\bZ_5 & [\nu_7]\circ\sigma_{10} \\ \hline
11 & \bZ_2 & i_*\varepsilon_3 & 18 & \bZ_8\oplus\bZ_2\oplus\bZ_9\oplus\bZ_{35} & [\zeta_7], i_*\overline{\varepsilon}_3,\  [3\cdot\alpha_3'(7)] \\ \hline
\end{array}
}
$$
\begin{eqnarray*}
\text{Table 4 :}\ \pi_n(\Sp(2))
\end{eqnarray*}

\vspace{0.3cm}
\subsection{$[C_{\Sigma^n\omega}, \Sp(2)]\ (n=1,2)$} 

By the cofibration sequence and Table 4, it is easy to see that
$$
[C_{\Sigma\omega}, \Sp(2)] = \mathbb{Z}_2\big\{i_*\overline{\eta_3}\big\},\quad
[C_{\Sigma^2\omega}, \Sp(2)] = \mathbb{Z}_2\big\{i_*\big(\eta_3\circ\Sigma\overline{\eta_3}\big)\big\}.
$$

\vspace{0.3cm}
\subsection{$[C_{\Sigma^3\omega}, \Sp(2)]$}

By Table 4, we have the following exact sequence.
\begin{equation}\label{sp-pi-3-1}
 \begin{CD}
\bZ\big\{[12\iota_7]\big\} @>\text{$(\Sigma^4\omega)^*$}>> \bZ_8\big\{[\nu_7]\big\}\oplus\bZ_3\{i_*\alpha_2(3)\}\oplus\bZ_5 @>>> [C_{\Sigma^3\omega}, \Sp(2)] @>>> 0. 
 \end{CD}
 \end{equation} 

\begin{lemma}\label{sp-pi-3-2} 
$(\Sigma^4\omega)^*[12\iota_7]= i_*\alpha_2(3).$
\end{lemma}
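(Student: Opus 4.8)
The plan is to analyze the element $(\Sigma^4\omega)^*[12\iota_7]=[12\iota_7]\circ\Sigma^4\omega\in\pi_{10}(\Sp(2))$ by means of the fibration $\s^3\xrightarrow{i}\Sp(2)\xrightarrow{p}\s^7$, treating the primes $2,3,5$ separately. First I would compute the suspension $\Sigma^4\omega\in\pi_{10}(\s^7)=\pi_3^s=\bZ_{24}\{\nu_7\}$. From $[\iota_4,\iota_4]=2\nu_4-\Sigma\nu'$ and the vanishing of Whitehead products under suspension one gets $\Sigma^2\nu'=2\nu_5$, hence $\Sigma^4\nu'=2\nu_7$, while $\Sigma^4\alpha_1(3)=\alpha_1(7)$ is the $3$-primary generator; therefore $\Sigma^4\omega$ is an element of order $12$ in $\bZ_{24}$, with no $5$-component (see \cite{T3}).

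Next I compute $p_*\big([12\iota_7]\circ\Sigma^4\omega\big)=(12\iota_7)\circ\Sigma^4\omega=12\,\Sigma^4\omega=0$, since $\Sigma^4\omega$ has order $12$. Hence $(\Sigma^4\omega)^*[12\iota_7]$ lies in $\Ker p_*=\image\, i_*$, so it equals $i_*\xi$ for some $\xi\in\pi_{10}(\s^3)=\bZ_{15}$. Two primary parts are then immediate: as $\pi_{10}(\s^3)=\bZ_{15}$ has no $2$-torsion, the element has trivial $2$-component, and since $\omega=\nu'+\alpha_1(3)$ (so $\Sigma^4\omega$) has trivial $5$-component, so does the composite. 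It remains only to identify the $3$-primary part, and for the intended use in \eqref{sp-pi-3-1} it suffices to show it generates the $\bZ_3$ summand.

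For the $3$-primary part I work $3$-locally, where $\omega=\alpha_1(3)$ and $\Sigma^4\omega=\alpha_1(7)$ up to a unit, and apply Theorem \ref{MT} to the $3$-localized fibration $\s^3\xrightarrow{i}\Sp(2)\xrightarrow{p}\s^7$, for which $\partial\iota_7=\omega=\alpha_1(3)$. Taking $\alpha=\iota_7$, $\beta=3\iota_6$, $\gamma=\alpha_1(6)$, one checks $\partial\alpha\circ\beta=3\alpha_1(3)=0$ and $\beta\circ\gamma=3\alpha_1(6)=0$, and that $\alpha_2(3)$ lies in the bracket $\{\alpha_1(3),3\iota_6,\alpha_1(6)\}$, this being the standard secondary composition relation $\alpha_2\in\{\alpha_1,3\iota,\alpha_1\}$ for the $\alpha$-family at the prime $3$. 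The theorem then yields $\epsilon\in\pi_7(\Sp(2))$ with $p_*\epsilon=\iota_7\circ\Sigma\beta=3\iota_7$ and $i_*\alpha_2(3)=\epsilon\circ\Sigma\gamma=\epsilon\circ\alpha_1(7)$. Since $\pi_7(\s^3)=\bZ_2$ is trivial $3$-locally, $p_*$ is injective on $\pi_7(\Sp(2))_{(3)}$ with image $3\bZ_{(3)}$; as $p_*[12\iota_7]=12\iota_7=4\cdot 3\iota_7$ with $4$ a $3$-local unit, we get $[12\iota_7]=4\epsilon$ $3$-locally, whence $[12\iota_7]\circ\alpha_1(7)=4\,i_*\alpha_2(3)=i_*\alpha_2(3)$ because $\alpha_2(3)$ has order $3$. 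Combining this with the vanishing $2$- and $5$-components gives $(\Sigma^4\omega)^*[12\iota_7]=i_*\alpha_2(3)$.

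The main obstacle is this last, $3$-primary step: one must match the indices in Theorem \ref{MT} correctly, verify that $\alpha_2(3)$ genuinely sits in the unstable Toda bracket $\{\alpha_1(3),3\iota_6,\alpha_1(6)\}$ (its indeterminacy is harmless since the target $\bZ_3$ forces any such representative to be a generator), and justify applying the theorem to the $3$-localized fibration. By contrast, the suspension computation of $\Sigma^4\omega$ and the prime-by-prime bookkeeping are routine.
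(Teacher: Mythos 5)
Your proposal follows essentially the same route as the paper: everything is reduced to the $3$-primary part, which is then computed by Theorem \ref{MT} for the fibration $\s^3\to\Sp(2)\to\s^7$ with $\beta=3\iota_6$, $\gamma=\alpha_1(6)$, together with the relation $\alpha_2(3)\in\{\alpha_1(3),3\iota_6,\alpha_1(6)\}$. The differences are in execution. The paper takes $\alpha=4\iota_7$ integrally, so that $\partial\alpha=4\omega=\alpha_1(3)$ (because $4\nu'=0$) and $\epsilon=[12\iota_7]$ at once, whereas you take $\alpha=\iota_7$ in the $3$-localized fibration and then rescale by the $3$-local unit $4$; both work, the paper's choice just avoids having to justify localizing. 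Where the paper splits $\Sigma^4\omega=2\nu_7+\alpha_1(7)$ and claims $[12\iota_7]\circ 2\nu_7=0$, you instead note that the whole composite lies in $\Ker p_*=i_*\pi_{10}(\s^3)\cong\bZ_{15}$ and has order dividing $12$, hence is $3$-primary. Your version of this step is actually the sounder one: since $2\nu_7$ has order $12$, its $3$-component is $\pm\alpha_1(7)\neq 0$, so by the paper's own identity $[12\iota_7]\circ\alpha_1(7)=i_*\alpha_2(3)$ the composite $[12\iota_7]\circ 2\nu_7$ has $3$-component $\pm i_*\alpha_2(3)$; the paper's inference ``$p_*(\cdot)=0$, hence $(\cdot)=0$'' ignores that $\Ker p_*\cong\bZ_{15}$. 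This discrepancy only affects the overall sign (unit in $\bZ_3$) of the answer, hence nothing downstream.

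Two defects in your write-up, neither fatal. First, the relations $[\iota_4,\iota_4]=2\nu_4-\Sigma\nu'$ and $\Sigma^2\nu'=2\nu_5$ are only $2$-local statements; integrally they are false for order reasons, since $\nu'$ has order $4$ while $2\nu_5$ has order $12$ in $\pi_8(\s^5)=\bZ_{24}$ (integrally $\Sigma^2\nu'=\pm 6\nu_5$, and it is the generator $\omega$, not $\nu'$, whose double suspension is $\pm2\nu_5$ or $\pm10\nu_5$). Fortunately this claim carries no weight in your argument: all you use is that $12\cdot\Sigma^4\omega=0$ (automatic, as $\omega$ has order $12$), that $\pi_{10}(\s^7)=\bZ_{24}$ has no $5$-torsion (automatic), and that $\Sigma^4\omega$ is a unit multiple of $\alpha_1(7)$ after localization at $3$ (which follows from $\omega\equiv\alpha_1(3)$ modulo $2$-torsion and the stable nontriviality of $\alpha_1$). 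Second, precisely because your $3$-local identification of $\Sigma^4\omega$ is only ``up to a unit,'' your argument determines $(\Sigma^4\omega)^*[12\iota_7]$ only up to sign, i.e. as $\pm i_*\alpha_2(3)$; fixing the sign would require pinning down the conventions relating $\alpha_1(7)$, $\Sigma^4\alpha_1(3)$, and $\alpha_2(3)$. Since $i_*\alpha_2(3)$ and $-i_*\alpha_2(3)$ generate the same $\bZ_3$, this ambiguity (which, as noted above, the paper's own proof also does not escape) is harmless for every use of the lemma, namely the exact sequence (\ref{sp-pi-3-1}) and the determination of $\Ker(\Sigma^4\omega)^*$ in the computation of $[C_{\Sigma^4\omega},\Sp(2)]$.
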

\begin{proof}
It is known that $\Sigma^4\omega = 2\nu_{7} + \alpha_1(7)$. 
 Let $p : \Sp(2) \to \s^7$ be the bundle projection with fibre $\s^3$. 
Then $p_*([12\iota_7]\circ2\nu_7) = 0$, and hence $[12\iota_7]\circ2\nu_7 = 0$ by Table 4. 
Next consider the composition $[12\iota_7]\circ\alpha_1(7)$. 
We apply Theorem \ref{MT} to the fibration $p:\Sp(2)\to \s^7$ by taking $\alpha=4\iota_7,\ \beta=3\iota_6,\ \gamma=\alpha_1(6)$. 
Then we obtain 
\begin{equation}\label{iotaalpha}
[12\iota_7]\circ\alpha_1(7)= i_*\alpha_2(3).
\end{equation}
Hence $(\Sigma^4\omega)^*[12\iota_7]= i_*\alpha_2(3)$ as desired. 
\end{proof}

Consequently, by (\ref{sp-pi-3-1}) we obtain
$$
[C_{\Sigma^3\omega}, \Sp(2)]_{(2,3)}=\mathbb{Z}_8\big\{(\Sigma^3q_3)^*[\nu_7]\big\}.
$$

\vspace{0.3cm}

\subsection{$[C_{\Sigma^4\omega}, \Sp(2)]$}

By Table 4, we have the following exact sequence.
\begin{equation*}
 \begin{CD}
0 @>>> \bZ_2\{i_*\varepsilon_3\} @>(\Sigma^4q_3)^*>> [C_{\Sigma^4\omega}, \Sp(2)] 
@>(\Sigma^4i')^*>> \bZ\big\{[12\iota_7]\big\} 
@>\text{$\Sigma^4\omega^*$}>> \bZ_{120}
 \end{CD}
 \end{equation*} 
By Lemma \ref{sp-pi-3-2}, $\mathrm{Ker}(\Sigma^4\omega)^* = \bZ\big\{3[12\iota_7]\big\}$. 
It follows that
$$[C_{\Sigma^4\omega}, \Sp(2)] = \mathbb{Z}_2\big\{(\Sigma^4q_3)^*i_*\varepsilon_3\big\} 
\oplus \mathbb{Z}\big\{\overline{3[12\iota_7]}\big\}.$$

\vspace{0.3cm}
\subsection{$[C_{\Sigma^5 \omega}, \Sp(2)]$}

By Table 4, we easily have $(\Sigma^5q_3)^* :\pi_{12}(\Sp(2)) \cong [C_{\Sigma^5\omega}, \Sp(2)]$. 
Hence 
$$
[C_{\Sigma^5\omega},\Sp(2)]=\bZ_2\big\{(\Sigma^5q_3)^*i_*\mu_3\big\}\oplus\bZ_2\big\{(\Sigma^5q_3)^*i_*(\eta_3\varepsilon_4)\big\}.
$$

\vspace{0.3cm}
\subsection{$[C_{\Sigma^6\omega}, \Sp(2)]$}

By Table 4, we have the following exact sequence.
$$
\begin{CD}
\bZ_8\big\{[\nu_7]\big\}\oplus\bZ_{15} @>(\Sigma^7\omega)^*>> \bZ_4\big\{[\nu_7]\circ\nu_{10}\big\}\oplus\bZ_2\{i_*\eta_3\mu_4\} 
@>(\Sigma^6q_3)^*>> [C_{\Sigma^6\omega}, \Sp(2)] @>>> 0
\end{CD}
$$
Hence we obtain
 $$[C_{\Sigma^6\omega}, \Sp(2)] = \mathbb{Z}_2\big\{(\Sigma^6q_3)^*[\nu_7]\circ\nu_{10}\big\} \oplus \mathbb{Z}_2\big\{(\Sigma^6q_3)^*i_*(\eta_3\mu_4)\big\}.$$

\vspace{0.3cm} 
\subsection{$[C_{\Sigma^7\omega}, \Sp(2)]$}

By Table 4, we have the following exact sequence:
$$
0\to\bZ_{16}\big\{[2\sigma']\big\}\oplus\bZ_3\{i_*\alpha_3(3)\}\overset{(\Sigma^7q_3)^*}{\to} [C_{\Sigma^7\omega},\Sp(2)]_{(2,3)}\overset{(\Sigma^7i')^*}{\to} \bZ_4\big\{2[\nu_7]\big\}\oplus\bZ_3\{i_*\alpha_2(3)\}\to 0.
$$
We shall prove
\begin{gather}
[C_{\Sigma^7\omega},\Sp(2)]_{(2)}=\bZ_{32}\Big\{\overline{2[\nu_7]}\Big\}\oplus\bZ_2\Big\{2\cdot\overline{2[\nu_7]}-z\cdot(\Sigma^7q_3)^*[2\sigma']\Big\},\ z\equiv 1\pmod{2},\label{pi7-1}\\
[C_{\Sigma^7\omega},\Sp(2)]_{(3)}=\bZ_{9}\big\{\overline{i_*\alpha_2(3)}\big\}.\label{pi7-2}
\end{gather}

Firstly we prove (\ref{pi7-1}). 
By Table 4 and \cite{T3}, we have the following commutative diagram with exact rows and columns:
\begin{equation}\label{pi7-3}
\xymatrix{
&  \bZ_{16}\big\{[2\sigma']\big\} \ \ar@{>->}[r]^-{q^*} \ar[d]^-{p_*} & [C_{\Sigma^7\omega},\Sp(2)]_{(2)} \ 
\ar@{->>}[r]^-{i^*} \ar[d]^-{p_*} & \bZ_4\big\{2[\nu_7]\big\} \ar[d]^-{p_*}\\
&  \bZ_8\{\sigma'\} \ \ar@{>->}[r]^{q^*} \ar[d]^-{\partial} & [C_{\Sigma^7\omega},\s^7]_{(2)} 
\ \ar @{->>}[r]^-{i^*} \ar[d]^-{\partial} & \bZ_8\{\nu_7\}\\
 & \bZ_4\{\varepsilon'\}\oplus\bZ_2\{\eta_3\mu_4\} \ \ar@{>->>}[r]^-{q^*} \ar[d]^-{i_*} 
& [C_{\Sigma^6\omega},\s^3]_{(2)} \ar[d]^-{i_*}  \\
\bZ_8\big\{[\nu_7]\big\} \ar[r]^-{(2\nu_{10})^*} & \bZ_4\big\{[\nu_7]\circ\nu_{10}\big\}\oplus\bZ_2\{i_*\eta_3\mu_4\} 
\ar[r]^-{q^*} & [C_{\Sigma^6\omega},\Sp(2)]_{(2)} 
}
\end{equation}
We claim that the second row splits:
\begin{equation}\label{pi7-4}
[C_{\Sigma^7\omega},\s^7]_{(2)}=\bZ_8\{q^*\sigma'\}\oplus\bZ_8\{\overline{\nu_7}\}.
\end{equation}
This is done as follows. 
By \cite{T3}, we easily have
\begin{equation}\label{pi7-5}
[C_{\Sigma^3\omega},\s^3]_{(2)}=\bZ_4\big\{\overline{\nu'}\big\}
\end{equation}
and the following exact sequence:
$$
\begin{CD}
0@>>>\bZ_2\{\sigma'''\}@>q^*>>[C_{\Sigma^5\omega},\s^5]_{(2)}@>i^*>>\bZ_8\{\nu_5\}@>>>0.
\end{CD}
$$
Since $i^*\big(2\cdot\overline{\nu_5}-\Sigma^2\overline{\nu'}\big)=0$, we can write 
$2\cdot\overline{\nu_5}-\Sigma^2\overline{\nu'}=c\cdot q^*\sigma'''$ $(c\in\bZ)$. 
Then $4\cdot\overline{\nu_5}-2\cdot\Sigma^2\overline{\nu'}=0$ so that the order of $\overline{\nu_5}$ is $8$, 
since $i^*(2\cdot\Sigma^2\overline{\nu'})=4\nu_5$ so that the order of $2\cdot\Sigma^2\overline{\nu'}$ is $2$ by (\ref{pi7-5}). 
Define $\overline{\nu_7}:=\Sigma^2\overline{\nu_5}$. 
Then the order of $\overline{\nu_7}$ is $8$, for the order of $i^*(\overline{\nu_7})=\nu_7$ is $8$. 
Thus we obtain (\ref{pi7-4}). 

In (\ref{pi7-3}), we have $i^*\varepsilon'=2[\nu_7]\circ\nu_{10}=(\Sigma^7\omega)^*[\nu_7]$ by \cite{MT}. 
Hence $\partial\sigma'=2\varepsilon'$, $i_*q^*\varepsilon'=q^*i_*\varepsilon'=0$ and
\begin{equation}\label{pi7-6}
\partial q^*\sigma'=q^*\partial\sigma'=2 q^*\varepsilon'.
\end{equation}
Hence the kernel of the second $i_*$ of (\ref{pi7-3}) equals to $\bZ_4\{q^*\varepsilon'\}$. 
This kernel equals to the image of the second $\partial$ of (\ref{pi7-3}). 
Hence 
\begin{equation}\label{pi7-7}
\partial\overline{\nu_7}=\pm q^*\varepsilon'
\end{equation}
by (\ref{pi7-4}) and (\ref{pi7-6}). 
We have $i^*\big(2\cdot\overline{\nu_7}-p_*\overline{2[\nu_7]}\big)=0$ so that we can write
\begin{equation}\label{pi7-8}
2\cdot\overline{\nu_7}-p_*\overline{2[\nu_7]}=a\cdot q^*\sigma'\quad(a\in\bZ).
\end{equation}
We then have
\begin{align*}
2a\cdot q^*\varepsilon'&=\partial(a\cdot q^*\sigma')\quad(\text{by (\ref{pi7-6})})\\
&=\partial\big(2\cdot\overline{\nu_7}- p_*\overline{2[\nu_7]}\big)=2\cdot\partial\overline{\nu_7}\\
&=2\cdot q^*\varepsilon'\quad(\text{by (\ref{pi7-7}}).
\end{align*}
Hence $2a\equiv 2\pmod{4}$, that is, $a$ is odd. 
It follows that, by multiplying $4$ with (\ref{pi7-8}), we have
$$
4\cdot q^*\sigma'=-4\cdot p_*\overline{2[\nu_7]}.
$$
On the other hand, we can write
\begin{equation}\label{pi7-9}
4\cdot\overline{2[\nu_7]}=y\cdot q^*[2\sigma']\quad (y\in\bZ).
\end{equation}
Hence we have
$$
4\cdot q^*\sigma'=-y\cdot p_*q^*[2\sigma']=-2y\cdot q^*\sigma'.
$$
Hence $-2y\equiv 4\pmod{8}$, that is, 
\begin{equation}\label{pi7-10}
y\equiv 2\pmod{4}.
\end{equation}
Thus the order of $4\cdot\overline{2[\nu_7]}$ is $8$, that is, the order of $\overline{2[\nu_7]}$ is $32$. 
Also the order of $2\cdot\overline{2[\nu_7]}-(y/2)\cdot q^*[2\sigma']$ is $2$. 
Therefore we obtain (\ref{pi7-1}) by the first row of (\ref{pi7-3}). 

As a byproduct of (\ref{pi7-10}), we have

\begin{corollary}
$[\nu_7]\circ\eta_{10}=i_*\varepsilon_3\in\pi_{11}(\Sp(2))=\bZ_2\{i_*\varepsilon_3\}$.
\end{corollary}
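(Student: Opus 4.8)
The plan is to reduce the assertion to a single nonvanishing question and then to read that off from \eqref{pi7-10}. First I would compute the projection: since $p_*([\nu_7]\circ\eta_{10})=p_*[\nu_7]\circ\eta_{10}=\nu_7\circ\eta_{10}\in\pi_{11}(\s^7)=0$ by \cite{T3}, the element $[\nu_7]\circ\eta_{10}$ lies in $\Ker p_*=\image i_*$. By Table 4, $\pi_{11}(\Sp(2))=\bZ_2\{i_*\varepsilon_3\}$, so $[\nu_7]\circ\eta_{10}$ is either $0$ or $i_*\varepsilon_3$, and the entire problem is to decide which.

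To identify the element intrinsically I would apply Theorem \ref{MT} to the fibration $\s^3\xrightarrow{i}\Sp(2)\xrightarrow{p}\s^7$, whose boundary satisfies $\partial\iota_7=\omega$. Take $\alpha=\iota_7$, $\beta=\nu_6$, $\gamma=\eta_9$, so that $m=6$, $l=9$, $k=10$. Working $2$-locally (legitimate, as the target $\pi_{11}(\Sp(2))$ is $2$-primary), the hypotheses hold: $\omega\circ\nu_6=0$ because $\pi_9(\s^3)_{(2)}=0$ (one has $\pi_9(\s^3)=\bZ_3$), and $\nu_6\circ\eta_9=0$ because $\pi_{10}(\s^6)=0$. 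Theorem \ref{MT} then produces $\epsilon\in\pi_{10}(\Sp(2))$ with $p_*\epsilon=\iota_7\circ\Sigma\nu_6=\nu_7$ and $i_*\delta=\epsilon\circ\Sigma\eta_9=\epsilon\circ\eta_{10}$ for each $\delta\in\{\omega,\nu_6,\eta_9\}$. Since $\pi_{10}(\s^3)_{(2)}=0$ (as $\pi_{10}(\s^3)=\bZ_{15}$), the map $p_*$ is injective on $\pi_{10}(\Sp(2))_{(2)}=\bZ_8\{[\nu_7]\}$, so necessarily $\epsilon=[\nu_7]$. Hence, $2$-locally, $[\nu_7]\circ\eta_{10}=i_*\langle\omega,\nu_6,\eta_9\rangle$, a bracket lying in $\pi_{11}(\s^3)_{(2)}=\bZ_2\{\varepsilon_3\}$. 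This confirms the dichotomy of the first step and reduces the claim to showing the bracket is nontrivial.

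The nonvanishing is exactly what I would extract from \eqref{pi7-10}. The relation $4\cdot\overline{2[\nu_7]}=y\cdot(\Sigma^7q_3)^*[2\sigma']$ with $y\equiv2\pmod4$ records that the extension $\overline{2[\nu_7]}$ has order precisely $32$ rather than $16$. The idea is that this order-$32$ phenomenon is governed by the secondary composition attached to extending $2[\nu_7]$ across the top cell of $C_{\Sigma^7\omega}$, and that secondary composition degenerates exactly when $[\nu_7]\circ\eta_{10}=0$: were the latter zero, the $2$-adic valuation of $y$ would jump, forcing $y\equiv0\pmod4$ and contradicting \eqref{pi7-10}. Thus $\langle\omega,\nu_6,\eta_9\rangle=\varepsilon_3$ and $[\nu_7]\circ\eta_{10}=i_*\varepsilon_3$.

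The main obstacle will be making this last implication precise, i.e.\ converting the congruence $y\equiv2\pmod4$ of \eqref{pi7-10} into the nonvanishing of $[\nu_7]\circ\eta_{10}$. The most promising route is to trace $4\cdot\overline{2[\nu_7]}$ through the fibration columns of \eqref{pi7-3}, combining $\partial\overline{\nu_7}=\pm(\Sigma^6q_3)^*\varepsilon'$ from \eqref{pi7-7} and the relation \eqref{pi7-8} to reinterpret the order computation as a statement about a composition that survives iff $[\nu_7]\circ\eta_{10}\neq0$. An alternative would be to evaluate $\langle\omega,\nu_6,\eta_9\rangle$ directly from Toda's composition relations in \cite{T3}; the delicate point there is that the stable bracket $\langle\nu,\nu,\eta\rangle$ is undefined (as $\nu^2\neq0$), so the computation is genuinely unstable and one must control the indeterminacy $\omega\circ\pi_{11}(\s^6)$ carefully. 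I expect the former route, anchored in \eqref{pi7-10}, to be the shorter one, which is why the result is naturally phrased as a byproduct.
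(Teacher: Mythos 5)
Your opening dichotomy is fine, and you have correctly guessed which established fact must be contradicted: the congruence $y\equiv 2\pmod{4}$ of (\ref{pi7-10}). That is indeed the paper's strategy. But the proposal stops exactly where the proof has to begin. The implication you defer as ``the main obstacle'' --- that $[\nu_7]\circ\eta_{10}=0$ would force $y\equiv 0\pmod{4}$ --- \emph{is} the entire mathematical content of the corollary, and neither of your two suggested routes supplies it. The paper's mechanism is quite specific and uses tools absent from your sketch: it forms the Toda bracket $\{2[\nu_7],2\nu_{10},4\iota_{13}\}$, whose indeterminacy is $4\pi_{14}(\Sp(2))$, so that the bracket equals $x\cdot[2\sigma']+4\pi_{14}(\Sp(2))$ as in (\ref{pi7-11}); it then shifts entries using $\eta_{10}^3=4\nu_{10}$ (mod odd torsion) to get $\{2[\nu_7],2\nu_{10},4\iota_{13}\}=\{[\nu_7]\circ\eta_{10},\eta_{11}^2,4\iota_{13}\}$ as in (\ref{pi7-12}), so that the assumption $[\nu_7]\circ\eta_{10}=0$ makes the bracket equal to its indeterminacy. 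Finally --- and this is the step with no counterpart in your proposal --- it evaluates the particular element $\overline{2[\nu_7]}\circ\widetilde{4\iota_{13}}$ of that bracket using the power maps $\psi^k$ of the Lie group $\Sp(2)$: by (\ref{pi7-9}), $4\cdot\bigl(\overline{2[\nu_7]}\circ\widetilde{4\iota_{13}}\bigr)=\bigl(4\cdot\overline{2[\nu_7]}\bigr)\circ\widetilde{4\iota_{13}}=\bigl(y\cdot q^*[2\sigma']\bigr)\circ\widetilde{4\iota_{13}}=4y[2\sigma']$, which must vanish under the assumption, giving $y\equiv 0\pmod 4$ and the contradiction. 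Your first route (tracing $4\cdot\overline{2[\nu_7]}$ through diagram (\ref{pi7-3}) with (\ref{pi7-7}) and (\ref{pi7-8})) cannot work as stated: that diagram only encodes the cofibration and fibration data already used to prove (\ref{pi7-10}) itself, and $\eta_{10}$ never enters it, so it contains no channel through which the hypothesis $[\nu_7]\circ\eta_{10}=0$ could influence $y$. Your second route (direct evaluation of a triple bracket) is left entirely open.

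There is also a genuine flaw in your intermediate reduction via Theorem \ref{MT}. The hypothesis $\partial\alpha\circ\beta=0$ fails integrally for $\alpha=\iota_7$, $\beta=\nu_6$: since precomposition with the suspension $\nu_6$ is additive, $\omega\circ\nu_6=\nu'\circ\nu_6+\alpha_1(3)\circ\nu_6=\alpha_1(3)\circ\nu_6=\pm 2\,\alpha_1(3)\circ\alpha_1(6)$, and $\alpha_1(3)\circ\alpha_1(6)$ generates $\pi_9(\s^3)\cong\bZ_3$ by \cite{T3}; only the $2$-primary part $\nu'\circ\nu_6$ dies by your order argument. So Theorem \ref{MT} as stated does not apply, and invoking it ``$2$-locally'' presupposes a localized version of \cite[Theorem 2.1]{MT} that you would have to formulate and justify separately. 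Note that the paper never needs such a reduction: it never writes $[\nu_7]\circ\eta_{10}$ as $i_*$ of a Toda bracket, and where it does apply Theorem \ref{MT} (Lemma \ref{sp-pi-3-2}, Proposition \ref{os-lemma}) the entries are chosen, e.g.\ $\beta=3\iota_6$, precisely so that the required compositions vanish integrally. In summary: the framing and the target congruence are right, but the proof of the corollary is absent, and the one concrete step you do propose rests on a hypothesis that is false.
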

\begin{proof}
Since indeterminacy of $\{2[\nu_7],2\nu_{10},4\iota_{13}\}$ is $4\cdot\pi_{14}(\Sp(2))$, 
we can write
\begin{equation}\label{pi7-11}
\big\{2[\nu_7],2\nu_{10},4\iota_{13}\big\}=x\cdot[2\sigma']+4\cdot\pi_{14}(\Sp(2)).
\end{equation}
Let $\psi^k:\Sp(2)\to\Sp(2)$ be defined by $\psi^k(A)=A^k$. 
We have
$$
\psi^2\circ\big\{2[\nu_7], 2\nu_{10}, 4\iota_{13}\big\}\subset \big\{4[\nu_7],2\nu_{10},4\iota_{13}\big\}
\subset\big\{[\nu_7],8\nu_{10},4\iota_{13}\big\}=4\pi_{14}(\Sp(2)).
$$
Hence $2x[2\sigma']\in 4\pi_{14}(\Sp(2))=\bZ_{4}\big\{4[2\sigma']\big\}\oplus\bZ_{105}$ by Table 4. 
Thus $x\equiv 0\pmod{2}$. 
On the other hand
\begin{equation}\label{pi7-12}
\big\{2[\nu_7], 2\nu_{10}, 4\iota_{13}\big\}=\big\{[\nu_7],4\nu_{10},4\iota_{13}\big\}=\big\{[\nu_7],\eta_{10}^3,4\iota_{13}\big\}
=\{[\nu_7]\circ\eta_{10},\eta_{11}^2,4\iota_{13}\}.
\end{equation}
To induce a contradiction, assume $[\nu_7]\circ\eta_{10}=0$. 
Then $\big\{2[\nu_7],2\nu_{10},4\iota_{13}\big\}=4\pi_{14}(\Sp(2))$ by (\ref{pi7-12}) and $x\equiv 0\pmod{4}$ by (\ref{pi7-11}). 
We then have
\begin{align*}
0&=4\cdot\big(\overline{2[\nu_7]}\circ\widetilde{4\iota_{13}}\big)
=\psi^4\circ\overline{2[\nu_7]}\circ\widetilde{4\iota_{13}}
=\big(4\cdot\overline{2[\nu_7]}\big)\circ\widetilde{4\iota_{13}}\\
&=\big(y\cdot q^*[2\sigma']\big)\circ\widetilde{4\iota_{13}}\quad(\text{by (\ref{pi7-9})})\\
&=\psi^y\circ[2\sigma']\circ q\circ\widetilde{4\iota_{13}}=\psi^y\circ[2\sigma']\circ 4\iota_{14}\\
&=4y[2\sigma']
\end{align*}
Thus $4y\equiv 0\pmod{16}$, that is, $y\equiv 0\pmod{4}$. 
This contradicts (\ref{pi7-10}). 
\end{proof}

Next we consider the 3-primary part of $[C_{\Sigma^7\omega}, \Sp(2)]$, that is, we prove (\ref{pi7-2}). 
First we remark that
  $$[C_{\Sigma^7\omega}, \Sp(2)]_{(3)} \cong [C_{\alpha_1(10)}, \Sp(2)]_{(3)}.$$
Hence it suffices to prove
$$
[C_{\alpha_1(10)}, \Sp(2)]_{(3)}\cong\bZ_9.
$$
We shall prove this as follows. 

\begin{prop}\label{os-lemma}
\begin{enumerate}
\item $\{\alpha_1(5), \alpha_1(8), 3\iota_{11}\}=\{3\iota_5, \alpha_1(5), \alpha_1(8)\}
=2\alpha_2(5)+3\pi_{12}(\s^5)\label{os-lemma(1)}$.
\item $i\circ\alpha_3(3)=[12\iota_7]\circ\alpha_2(7)\in\pi_{10}(\Sp(2))$.
\item $[C_{\alpha_1(10)},\Sp(2)]_{(3)}\cong[C_{\alpha_1(10)},\s^7]_{(3)}$.
\item $[C_{\alpha_1(10)},\s^7]_{(3)}\cong\bZ_9$. 
\end{enumerate}
\end{prop}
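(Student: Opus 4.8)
The statement to prove is Proposition~\ref{os-lemma}, which bundles four assertions leading to $[C_{\alpha_1(10)},\Sp(2)]_{(3)}\cong\bZ_9$. The plan is to prove the four parts in order, since each feeds the next.

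For part~(1), the identity is a Toda-bracket computation in the stable (or metastable) homotopy of spheres localized at $3$. I would work in Toda's framework \cite{T3}, using the relations among the $\alpha$-family and the standard composition/bracket formulas. The symmetry $\{\alpha_1(5),\alpha_1(8),3\iota_{11}\}=\{3\iota_5,\alpha_1(5),\alpha_1(8)\}$ should follow from the shuffle relation for three-fold brackets, and the evaluation as $2\alpha_2(5)$ modulo the indeterminacy $3\pi_{12}(\s^5)$ should come from the known $3$-primary structure of $\pi_{12}(\s^5)_{(3)}$, together with the fact that $\alpha_2$ arises as such a bracket of $\alpha_1$'s (this is essentially how the $\alpha$-family is defined via secondary compositions). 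The main work is bookkeeping the indeterminacy correctly.

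For part~(2), I would apply Theorem~\ref{MT} to the bundle $\s^3\xrightarrow{i}\Sp(2)\xrightarrow{p}\s^7$, exactly as in the proof of Lemma~\ref{sp-pi-3-2}. There the choice $\alpha=4\iota_7$, $\beta=3\iota_6$, $\gamma=\alpha_1(6)$ gave $[12\iota_7]\circ\alpha_1(7)=i_*\alpha_2(3)$. For part~(2), I would instead feed in part~(1): take $\beta$ and $\gamma$ to be the appropriate $\alpha_1$'s so that the bracket $\{\partial\alpha,\beta,\gamma\}$ realizes $\alpha_3(3)$ on the fibre $\s^3$ while $p_*$ of the resulting element on $\Sp(2)$ is forced to be $[12\iota_7]\circ\alpha_2(7)$. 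The boundary $\partial\iota_7=\omega$ has $3$-component $\alpha_1(3)$, and composing with the bracket from part~(1) should produce $\alpha_3(3)$; Theorem~\ref{MT} then identifies $i_*\alpha_3(3)$ with a lift whose $p$-image is $[12\iota_7]\circ\alpha_2(7)$.

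For parts~(3) and~(4), I would use the cofibration $\s^{10}\xrightarrow{\alpha_1(10)}\s^7\to C_{\alpha_1(10)}$ and the induced long exact sequences from $\Sp(2)$ and from $\s^7$. Part~(3) asserts that the fibre $\s^3$ contributes nothing $3$-locally, so I would show $\hat i_*$ or the relevant connecting maps kill the $\s^3$-summand; concretely, the $3$-components of $\pi_n\Sp(2)$ near the relevant degrees come entirely from the $\s^7$-factor after the splitting $\Sp(2)_{(p)}\simeq\s^3_{(p)}\times\s^7_{(p)}$ is \emph{not} available at $p=3$, so I must instead use part~(2) to show the $\alpha_3(3)$ and $\alpha_2(3)$ classes align so that $i_*$ becomes an isomorphism onto the $\s^7$-piece in the diagram. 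Part~(4) is then a direct computation of $[C_{\alpha_1(10)},\s^7]_{(3)}$ from the cofibration: the relevant exact sequence reads $\pi_{10}(\s^7)_{(3)}\xrightarrow{\alpha_1(10)^*}\pi_{13}(\s^7)_{(3)}\to[C_{\alpha_1(10)},\s^7]_{(3)}\to\cdots$, and using $\pi_{13}(\s^7)_{(3)}=\bZ_9\{\alpha_2'(7)\}$ (or the appropriate group from \cite{T3}) together with the vanishing or triviality of the neighbouring maps forces the answer $\bZ_9$. The hard part is expected to be part~(2): getting Theorem~\ref{MT} to deliver $\alpha_3(3)$ rather than merely some multiple of it, which is exactly where the precise bracket evaluation of part~(1) must be invoked, and where the indeterminacy computation is most delicate.
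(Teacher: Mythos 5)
Your outline of part (1) coincides with the paper's argument (stabilize, use Toda's shuffle relations and $\langle\alpha_1,3,\alpha_1\rangle=\alpha_2$, then descend along the $3$-locally injective stabilization $\pi_{12}(\s^5)\to G_7$, the stable $7$-stem), but your plans for parts (2) and (4) each contain a genuine gap, and both stem from misplacing where part (1) is actually used. For part (2) you propose to run Theorem~\ref{MT} with ``$\beta$ and $\gamma$ the appropriate $\alpha_1$'s'' and to invoke part (1) so that the bracket $\{\partial\alpha,\beta,\gamma\}$ realizes $\alpha_3(3)$. That cannot work: a bracket whose entries are two $\alpha_1$'s and a degree-$3$ map is exactly what part (1) evaluates, and it equals $2\alpha_2(5)+3\pi_{12}(\s^5)$ --- such brackets produce $\alpha_2$-classes, never $\alpha_3$. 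Moreover, Theorem~\ref{MT} outputs the relation $i_*\delta=\epsilon\circ\Sigma\gamma$, so to land on $[12\iota_7]\circ\alpha_2(7)$ you are forced to take $\Sigma\gamma=\alpha_2(7)$, i.e.\ $\gamma=\alpha_2(6)$, which is not an $\alpha_1$. The paper's proof of (2) does exactly this: it takes $\alpha=4\iota_7$, $\beta=3\iota_6$, $\gamma=\alpha_2(6)$ (the hypotheses $\partial\alpha\circ\beta=0$ and $\beta\circ\gamma=0$ hold since $\partial\iota_7=\omega=\nu'+\alpha_1(3)$ and $3\alpha_2(6)=0$), and uses the Toda relation $\alpha_3(3)\in\{\alpha_1(3),3\iota_6,\alpha_2(6)\}$; part (1) plays no role there.

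For part (4), your exact sequence is dimensionally garbled: in this paper $\alpha_1(10)\in\pi_{13}(\s^{10})$, so $C_{\alpha_1(10)}=\s^{10}\cup_{\alpha_1(10)}e^{14}$ and the relevant sequence is
$$
0\to\pi_{14}(\s^7)_{(3)}=\bZ_3\{\alpha_2(7)\}\to[C_{\alpha_1(10)},\s^7]_{(3)}\to\bZ_3\{\alpha_1(7)\}\to 0,
$$
with right exactness coming from $\alpha_1(7)\circ\alpha_1(10)=0$; the group $\bZ_9$ you quote, generated by $\alpha_3'$, lives in the $11$-stem, not in $\pi_{13}(\s^7)$, and $\pi_{13}(\s^7)$ enters only as the target of $\alpha_1(10)^*$. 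More importantly, no amount of exactness or vanishing of neighbouring maps can force the answer: the sequence only exhibits $[C_{\alpha_1(10)},\s^7]_{(3)}$ as an extension of $\bZ_3$ by $\bZ_3$, and deciding that this extension is non-split (i.e.\ $\bZ_9$ rather than $\bZ_3\oplus\bZ_3$) is the entire content of (4). This is precisely where part (1) is needed in the paper: one first shows the suspensions $[C_{\alpha_1(8)},\s^5]_{(3)}\to[C_{\alpha_1(9)},\s^6]_{(3)}\to[C_{\alpha_1(10)},\s^7]_{(3)}$ are isomorphisms via the EHP sequence, and then for the extension $\overline{\alpha_1(5)}$ one computes $3\,\overline{\alpha_1(5)}=3\iota_5\circ\overline{\alpha_1(5)}\in\{3\iota_5,\alpha_1(5),\alpha_1(8)\}\circ\Sigma^5q'=\big(2\alpha_2(5)+3\pi_{12}(\s^5)\big)\circ\Sigma^5 q'$ by Toda's Proposition 1.9 and part (1); since $(\Sigma^5q')^*$ is injective this is nonzero, so $\overline{\alpha_1(5)}$ has order $9$. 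Finally, your sketch of part (3) names the wrong comparison map: it is not the fibre inclusion $i_*$ but post-composition $[12\iota_7]_*:[C_{\alpha_1(10)},\s^7]_{(3)}\to[C_{\alpha_1(10)},\Sp(2)]_{(3)}$ that the paper uses; it is an isomorphism on both ends of the two short exact sequences --- by Lemma~\ref{sp-pi-3-2} on the bottom cells and by part (2) on the top cells --- hence an isomorphism in the middle by the five lemma.
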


\medskip

\noindent{\it Proof of Proposition \ref{os-lemma} (1). }
It follows from \cite[Proposition 1.3]{T3} that 
\begin{gather*}
\Sigma^\infty\{\alpha_1(5), \alpha_1(8), 3\iota_{11}\}\subset\langle\alpha_1,\alpha_1,3\rangle,\quad 
\Sigma^\infty\{3\iota_5, \alpha_1(5), \alpha_1(8)\}\subset\langle 3,\alpha_1,\alpha_1\rangle,\\
\Sigma^\infty\{\alpha_1(3), 3\iota_6, \alpha_2(6)\}\subset\langle\alpha_1,3,\alpha_2\rangle .
\end{gather*}
We use following relations \cite[(3.9)]{T3}:
\begin{equation}\label{os-equation1}
\begin{gathered}
\langle\alpha_1,\alpha_1,3\rangle-\langle\alpha_1,3,\alpha_1\rangle+\langle 3,\alpha_1,\alpha_1\rangle\ni 0,\\
\langle\alpha_1,\alpha_1,3\rangle=\langle 3,\alpha_1,\alpha_1\rangle.
\end{gathered}
\end{equation}
Let $A\in\langle\alpha_1,\alpha_1,3\rangle$. 
Since $\langle\alpha_1,3,\alpha_1\rangle=\alpha_2$ and 
$\mathrm{Indet}\langle\alpha_1,\alpha_1,3\rangle=3 {G_{7}}$, 
it follows from (\ref{os-equation1}) that $2A - \alpha_2+3G_7\ni 0$ so that $A\in 2\alpha_2+3G_7$, 
since ${G_7}_{(3)} =\mathbb{Z}_3\{\alpha_2\}$, 
where $G_k$ denotes the $k$-th stable homotopy group of the sphere. 
Hence $\langle\alpha_1,\alpha_1,3\rangle=2\alpha_2+3G_7$. 
Since $\Sigma^\infty:\pi_{12}(\s^5)=\mathbb{Z}_3\{\alpha_2(5)\}\oplus\bZ_{10} \to G_7$ is injective 
and $\mathrm{Indet}\{\alpha_1(5),\alpha_1(8),3\iota_{11}\}
=\mathrm{Indet}\{3\iota_5,\alpha_1(5),\alpha_1(8)\}=3\pi_{12}(\s^5) $, it follows that 
$$
2\alpha_2(5)\in\{\alpha_1(5),\alpha_1(8),3\iota_{11}\}\cap\{3\iota_5, \alpha_1(5),\alpha_1(8)\}
$$ 
so that 
$\{\alpha_1(5),\alpha_1(8),3\iota_{11}\}=\{3\iota_5, \alpha_1(5),\alpha_1(8)\}=2\alpha_2(5)+3\pi_{12}(\s^5)$. 
\qed

\medskip

\noindent{\it Proof of Proposition \ref{os-lemma} (2). }
We can apply Theorem \ref{MT} to the fibration $\Sp(2)\to\s^7$ by taking 
$\alpha=4\iota_7$, $\beta=3\iota_6$ and $\gamma=\alpha_2(6)$. 
Indeed, we have $\beta\circ\gamma=0$ and $\partial\alpha\circ\beta=\alpha_1(3)\circ 3\iota_6=0$ 
since $\partial\iota_7=\omega=\nu'+\alpha_1(3)$. 
Hence we can use Theorem~\ref{MT} in this case. 
Therefore there exists $\epsilon\in\pi_7(\Sp(2))$ such that $p_*\epsilon=12\iota_7$ and $i_*(\alpha_3(3))=\epsilon\circ\alpha_2(7)$ 
so that $\epsilon=[12\iota_7]$ and $i_*(\alpha_3(3))=[12\iota_7]\circ\alpha_2(7)$.
\qed

\medskip

\noindent{\it Proof of Proposition \ref{os-lemma} (3). }
By \cite{T3} and Table 4, we have the following commutative diagram with exact rows.
$$
\begin{CD}
0 @>>>\bZ_3\{i_*\alpha_3(3)\} @>>>[C_{\alpha_1(10)}, \Sp(2)]_{(3)} @>>>
\bZ_3\{i_*\alpha_2(3)\} @>>> 0\\
@. @AA[12\iota_7]_*A @AA[12\iota_7]_*A @AA[12\iota_7]_*A @.\\
0 @>>>\bZ_3\{\alpha_2(7)\} @>>>[C_{\alpha_1(10)}, \s^7]_{(3)} @>>>
\bZ_3\{\alpha_1(7)\} @>>>0.
\end{CD}
$$
It follows from (\ref{iotaalpha}) and Proposition~\ref{os-lemma}~(2) 
that the first and the third $[12\iota_7]_*$ are isomorphisms 
so that the second $[12\iota_7]_*$ is also an isomorphism. 
Hence we obtain Proposition~\ref{os-lemma}~(3).
\qed

\medskip

\noindent{\it Proof of Proposition \ref{os-lemma} (4). }
We shall prove the following:
$$
[C_{\alpha_1(10)},\s^7]_{(3)}\overset{\Sigma}{\cong}[C_{\alpha_1(9)},\s^6]_{(3)}\overset{\Sigma}{\cong}[C_{\alpha_1(8)},\s^5]_{(3)}=\bZ_9\big\{\overline{\alpha_1(5)}\big\}.
$$
By \cite{T3} and the fact $\alpha_1(5)\circ\alpha_1(8)=0$ (\cite[(13.7)]{T3}), we have the following commutative diagram with exact rows.
$$
\begin{CD}
0@>>>\bZ_3\{\alpha_2(5)\}\oplus\bZ_{10}@>{\Sigma^5 q'}^*>>[C_{\alpha_1(8)},\s^5]
@>{\Sigma^5 i''}^*>>\bZ_3\{\alpha_1(5)\}\oplus\bZ_8@>>>0\\
@. @VV\Sigma V @VV\Sigma V @V\cong V\Sigma V @.\\
0@>>>\bZ_3\{\alpha_2(6)\}\oplus\bZ_{20}@>{\Sigma^6 q'}^*>>[C_{\alpha_1(9)},\s^6]
@>{\Sigma^6 i''}^*>>\bZ_3\{\alpha_1(6)\}\oplus\bZ_8@>>>0\\
@. @VV\Sigma V @VV\Sigma V @V\cong V\Sigma V @.\\
0@>>>\bZ_3\{\alpha_2(7)\}\oplus\bZ_{40}@>{\Sigma^7 q'}^*>>[C_{\alpha_1(10)},\s^7]
@>{\Sigma^7 i''}^*>>\bZ_3\{\alpha_1(7)\}\oplus\bZ_8@>>>0
\end{CD}
$$
Here $q':C_{\alpha_1(3)}\to\s^7$ is the quotient and $i'':\s^3\to C_{\alpha_1(3)}$ is the inclusion. 
By the EHP-sequence (\cite[(2.11)]{T3}), we know that two $\Sigma$'s in the first column are monomorphisms. 
Hence two $\Sigma$'s in the second column are also monomorphisms. 
Thus suspensions induce 
$$
[C_{\alpha_1(8)},\s^5]_{(3)}\cong [C_{\alpha_1(9)},\s^6]_{(3)}\cong [C_{\alpha_1(10)},\s^7]_{(3)}.
$$
Since $\Sigma\big(3\,\overline{\alpha_1(5)}\big)=\Sigma\big(3\iota_5\circ\overline{\alpha_1(5)}\big)$, it follows that 
$3\,\overline{\alpha_1(5)}=3\iota_5\circ\overline{\alpha_1(5)}$. 
We have
\begin{align*}
3\iota_5\circ\overline{\alpha_1(5)}\in\{3\iota_5, &\alpha_1(5), \alpha_1(8)\}\circ \Sigma^5 q' \qquad (\text{by \cite[Proposition 1.9]{T3}})\\
&=\big( 2\alpha_2(5)+3\pi_{12} (\s^5) \big) \circ \Sigma^5 q'\qquad (\text{by Proposition \ref{os-lemma} (1)})
\end{align*}
Hence we can write
$$
3\,\overline{\alpha_1(5)}=3\iota_5\circ\overline{\alpha_1(5)}={\Sigma^5 q'}^*\big(2\alpha_2(5)+x\big),\quad 10 x=0.
$$
Thus the order of $\overline{\alpha_1(5)}$ is a multiple of $9$. 
Therefore $[C_{\alpha_1(8)},\s^5]_{(3)}=\bZ_9\big\{\overline{\alpha_1(5)}\big\}$. 
This completes the proof of Proposition \ref{os-lemma}. 
\qed

\vspace{0.3cm}
\subsection{$[C_{\Sigma^8\omega}, \Sp(2)]$}

Since $\Sigma^m\omega=2\nu_{m+3}+\alpha_1(m+3)$ for $m\ge 2$, 
we have 
$$
(\Sigma^9\omega)^*\pi_{12}(\Sp(2))=0,\quad (\Sigma^8\omega)^*\pi_{11}(\Sp(2))=0
$$ 
by Table 4.  
Hence we have the following commutative diagram with exact rows.
$$
\begin{CD}
0@>>>\bZ_2\big\{[\sigma'\eta_{14}]\big\} @>(\Sigma^8q_3)^*>>[C_{\Sigma^8\omega},\Sp(2)]@>(\Sigma^8 i')^*>>\bZ_2\{i_*\varepsilon_3\} @>>>0\\
@.  @VVV @VVV @VVV @.\\
0@>>>\bZ_2\{\sigma'\eta_{14}\}\oplus\bZ_2^2 @>(\Sigma^8q_3)^*>\cong>[C_{\Sigma^8\omega},\s^7]@>>>0 @.
\end{CD}
$$
Thus we easily have 
$$
[C_{\Sigma^8\omega},\Sp(2)]=\bZ_2\big\{(\Sigma^8q_3)^*[\sigma'\eta_{14}]\big\}\oplus\bZ_2\big\{i_*\overline{\varepsilon_3}\big\}.
$$

\vspace{0.3cm}
\section{$\pi_1\mathrm{map}_*(G_2, G_2)$}

In this section we shall compute $[\Sigma G_2, G_2]\ (\cong \pi_1\mathrm{map}_*(G_2, G_2))$. 
As in the subsection 3.7, we use the fibration
$$
\begin{CD}
\SU(3)@>\hat{i}>> G_2 @>\hat{p}>>\s^6 ,
\end{CD}
$$
and the following results from \cite{M}.
\begin{center}
\begin{tabular}{|c|c|c|} \hline
$n$ & $\pi_nG_2$ & \mbox{gen. of 2-comp.} \\ \hline
1,2,4,5,7,10,12,13 & 0 &  \\[1mm] \hline\tabtopsp{0.1cm}
3 & $\bZ$ & $\hat{i}_*\iota_3$  \\[1mm] \hline\tabtopsp{0.1cm}
6 & $\bZ_3$ &   \\[1mm] \hline\tabtopsp{0.1cm}
8 & $\bZ_2$ & $\langle\eta_6^2\rangle$  \\[1mm] \hline\tabtopsp{0.1cm}
9 & $\bZ_6$ & $\langle\eta_6^2\rangle\circ\eta_8$\\[1mm] \hline\tabtopsp{0.1cm}
11 & $\bZ \oplus \bZ_2$ & $\langle2\Delta\iota_{13}\rangle, \hat{i}_*[\nu_5^2]$ \\[1mm] \hline\tabtopsp{0.1cm}
14 & $\bZ_{168}\oplus \bZ_2$ & $\langle \bar{\nu}_6 + \epsilon_6\rangle,  \hat{i}_*[\nu_5^2]\circ\nu_{11}$ \\[1mm] \hline\tabtopsp{0.1cm}
15 & $\bZ_2$ & $\langle \bar{\nu}_6 + \epsilon_6\rangle\circ\eta_{14}$ \\[1mm] \hline
\end{tabular}
\end{center}
\begin{eqnarray*}
\text{Table 5 :}\ \pi_n(G_2)
\end{eqnarray*}
In the Table~5 we follow the notations in \cite{M}.\\[0.3cm]
As is well-known, $G_2$ has the cell structure:
$$G_2 = \s^3 \cup e^5 \cup e^6 \cup e^8 \cup e^9 \cup e^{11} \cup e^{14}.$$
Let $G_2^{(n)}$ denote the $n$-skeleton of $G_2$. Let $M^n=C_{2\iota_{n-1}}=\s^{n-1}\cup_{2\iota_{n-1}} e^n$ for $n\ge 2$,  and 
$$
\begin{CD}
\s^{n-1}@>i_{n}>> M^n@>q_{n}>>\s^n
\end{CD}
$$
be the inclusion and the quotient map, respectively. 
Remark that $\Sigma M^n=M^{n+1}$. 
Then there exist the cofibrations as follows.
\begin{gather}
 \label{G2-1}\s^3 \to G_2^{(6)} \xrightarrow{\pi_1}  M^6,\\
\label{G2-2}G_2^{(6)} \to G_2^{(9)} \xrightarrow{\pi_2}  M^9 \xrightarrow{\delta} \Sigma G_2^{(6)}.  
\end{gather}
From (\ref{G2-1}) we obtain \cite[Lemma 3.6]{MS}:

\begin{lemma}[\cite{MS}]\label{G2-lemma1} 
$[\Sigma G_2^{(6)}, G_2] = 0$.
\end{lemma}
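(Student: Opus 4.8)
The plan is to derive the vanishing from two Puppe cofibre sequences together with Table~5, with no serious computation involved. First I would suspend the cofibration (\ref{G2-1}); since $\Sigma M^6=M^7$, this produces a cofibration $\s^4\to\Sigma G_2^{(6)}\xrightarrow{\Sigma\pi_1}M^7$ in which the first map is the suspension of the bottom-cell inclusion. Applying the contravariant functor $[-,G_2]$ to its Puppe sequence gives the exact sequence
$$[M^7,G_2]\xrightarrow{(\Sigma\pi_1)^*}[\Sigma G_2^{(6)},G_2]\longrightarrow\pi_4(G_2).$$
Since $\pi_4(G_2)=0$ by Table~5, the map $(\Sigma\pi_1)^*$ is surjective, so $[\Sigma G_2^{(6)},G_2]$ is a quotient of $[M^7,G_2]$; it therefore suffices to prove $[M^7,G_2]=0$.

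For the latter I would use the defining cofibration $\s^6\xrightarrow{i_7}M^7\xrightarrow{q_7}\s^7$ of the Moore space $M^7=C_{2\iota_6}$. Its Puppe sequence, after applying $[-,G_2]$, is the exact sequence
$$\pi_7(G_2)\xrightarrow{(2\iota_7)^*}\pi_7(G_2)\xrightarrow{q_7^*}[M^7,G_2]\xrightarrow{i_7^*}\pi_6(G_2)\xrightarrow{(2\iota_6)^*}\pi_6(G_2),$$
in which $(2\iota_7)^*$ and $(2\iota_6)^*$ are multiplication by $2$. Table~5 gives $\pi_7(G_2)=0$, so $\image q_7^*=0$ and hence $\Ker i_7^*=0$, i.e. $i_7^*$ is injective; and $\pi_6(G_2)=\bZ_3$, on which multiplication by $2$ is an isomorphism, so $\image i_7^*=\Ker\big(2\colon\bZ_3\to\bZ_3\big)=0$. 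Combining injectivity with the vanishing of the image yields $[M^7,G_2]=0$, and therefore $[\Sigma G_2^{(6)},G_2]=0$.

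Both $\Sigma G_2^{(6)}$ and $M^7=\Sigma M^6$ are suspensions, so all the sets above are abelian groups and the two Puppe sequences are honest exact sequences of groups; no subtlety about track addition arises. There is in fact no real obstacle here, the argument being a short diagram chase: the only homotopy group of $G_2$ in the relevant range that could contribute is $\pi_6(G_2)=\bZ_3$, and this is annihilated purely because $2$ is invertible modulo $3$. The only points demanding care are the bookkeeping of the Puppe connecting maps, so that $(2\iota_7)^*$ and $(2\iota_6)^*$ are correctly identified with doubling, and the identification $\Sigma M^6=M^7$ already recorded above.
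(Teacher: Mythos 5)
Your proof is correct, and it is essentially the paper's approach: the paper gives no argument of its own here, simply stating that the lemma follows from the cofibration (\ref{G2-1}) and citing \cite[Lemma 3.6]{MS}. Your computation — suspend (\ref{G2-1}) to reduce the claim via $\pi_4(G_2)=0$ to $[M^7,G_2]=0$, then use the Moore-space cofibration together with $\pi_7(G_2)=0$ and the fact that multiplication by $2$ is injective on $\pi_6(G_2)=\bZ_3$ — is exactly the natural filling-in of that citation, and all the identifications you flag (exactness of the Puppe sequences, $\Sigma M^6=M^7$, and $(2\iota_n)^*$ acting as multiplication by $2$) are valid.
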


Next we shall show the following. 

\begin{lemma}\label{G2-lemma2}   
$ {\Sigma\pi_2}^*: [M^{10}, G_2] \to [\Sigma G_2^{(9)}, G_2] $ 
is an isomorphism. 
\end{lemma}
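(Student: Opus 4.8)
The plan is to apply the contravariant functor $[-,G_2]$ to the Puppe cofiber sequence obtained by suspending the cofibration (\ref{G2-2}). Since $\Sigma M^9=M^{10}$, that sequence reads
\[
G_2^{(6)} \to G_2^{(9)} \xrightarrow{\pi_2} M^9 \xrightarrow{\delta} \Sigma G_2^{(6)} \to \Sigma G_2^{(9)} \xrightarrow{\Sigma\pi_2} M^{10} \xrightarrow{\Sigma\delta} \Sigma^2 G_2^{(6)} \to \cdots,
\]
and $[-,G_2]$ converts it into the exact sequence of abelian groups
\[
[\Sigma^2 G_2^{(6)}, G_2] \xrightarrow{(\Sigma\delta)^*} [M^{10}, G_2] \xrightarrow{(\Sigma\pi_2)^*} [\Sigma G_2^{(9)}, G_2] \to [\Sigma G_2^{(6)}, G_2],
\]
the last map being induced by the inclusion $G_2^{(6)}\hookrightarrow G_2^{(9)}$. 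Surjectivity of $(\Sigma\pi_2)^*$ is then immediate: by Lemma \ref{G2-lemma1} the target $[\Sigma G_2^{(6)}, G_2]$ vanishes, so the kernel of the last map is everything, and exactness yields that the image of $(\Sigma\pi_2)^*$ is all of $[\Sigma G_2^{(9)}, G_2]$. It therefore remains only to prove injectivity, i.e. that the image of $(\Sigma\delta)^*$ is trivial.

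To control the source I would suspend (\ref{G2-1}) twice, obtaining the cofibration $\s^5 \to \Sigma^2 G_2^{(6)} \xrightarrow{\Sigma^2\pi_1} M^8$, and run $[-,G_2]$ against it. Using Table 5 ($\pi_5 G_2=0$, $\pi_6 G_2=\bZ_3$, $\pi_7 G_2=0$, $\pi_8 G_2=\bZ_2\{\langle\eta_6^2\rangle\}$) together with the Moore-space cofibration $\s^7 \xrightarrow{2\iota_7}\s^7 \xrightarrow{i_8} M^8 \xrightarrow{q_8}\s^8$, one finds $[M^8,G_2]=\bZ_2\{q_8^*\langle\eta_6^2\rangle\}$, and then that $(\Sigma^2\pi_1)^*$ is an isomorphism (the connecting map $\pi_6 G_2=\bZ_3\to[M^8,G_2]=\bZ_2$ is zero and $\pi_5 G_2=0$). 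Hence $[\Sigma^2 G_2^{(6)}, G_2]=\bZ_2$ is generated by $g:=\langle\eta_6^2\rangle\circ c$, where $c:=q_8\circ\Sigma^2\pi_1\colon \Sigma^2 G_2^{(6)}\to\s^8$ collapses onto the top cell. Since $(\Sigma\delta)^*$ is a homomorphism, triviality of its image is equivalent to $(\Sigma\delta)^* g=0$.

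The heart of the argument is to verify $(\Sigma\delta)^* g=0$. Writing $c=\Sigma^2(q_6\pi_1)$, I would use $c\circ\Sigma\delta=\Sigma\beta$ with $\beta:=\Sigma(q_6\pi_1)\circ\delta\colon M^9\to\s^7$, so that $(\Sigma\delta)^* g=\langle\eta_6^2\rangle\circ\Sigma\beta$. Restricting $\delta$ to the bottom sphere $\s^8\subset M^9$ recovers $\Sigma$ of the attaching map $\phi_8\colon\s^7\to G_2^{(6)}$ of the $8$-cell (by naturality of the connecting map for $G_2^{(6)}\to G_2^{(8)}\to\s^8$), so $\beta\circ i_9=\Sigma\big((q_6\pi_1)\circ\phi_8\big)$, where $(q_6\pi_1)\circ\phi_8\in\pi_7(\s^6)=\bZ_2\{\eta_6\}$. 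This class vanishes because $\mathrm{Sq}^2$ acts trivially on $H^6(G_2;\bZ_2)$, namely $\mathrm{Sq}^2 x_6=(\mathrm{Sq}^1 x_3)^2=0$ as $H^4(G_2;\bZ_2)=0$. Thus $\beta\circ i_9=0$, so $\beta$ factors through $q_9\colon M^9\to\s^9$, giving $\beta\in\{0,\ q_9^*\eta_7^2\}$ and $\Sigma\beta\in\{0,\ q_{10}^*\eta_8^2\}$. In either case $\langle\eta_6^2\rangle\circ\Sigma\beta=(\langle\eta_6^2\rangle\circ\eta_8^2)\circ q_{10}$, which is $0$ since $\langle\eta_6^2\rangle\circ\eta_8^2\in\pi_{10}(G_2)=0$. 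Hence the image of $(\Sigma\delta)^*$ is trivial and $(\Sigma\pi_2)^*$ is injective.

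I expect the main obstacle to be precisely this third step: identifying the bottom-cell behaviour of the connecting map $\delta$, verifying the vanishing of the relevant attaching-map component via $\mathrm{Sq}^2$ on $H^6(G_2;\bZ_2)$, and checking that both possible values of $\beta$ are annihilated after composing with $\langle\eta_6^2\rangle$ (using $\pi_{10}(G_2)=0$). The surjectivity half, by contrast, is essentially formal once Lemma \ref{G2-lemma1} is available.
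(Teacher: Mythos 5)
Your proof is correct, and while it shares the paper's outer skeleton, it settles the decisive step by genuinely different means. Both arguments obtain surjectivity of $(\Sigma\pi_2)^*$ from Lemma \ref{G2-lemma1}, reduce injectivity to the vanishing of $(\Sigma\delta)^*$, and identify $[\Sigma^2 G_2^{(6)},G_2]\cong\pi_8(G_2)=\bZ_2$, generated by $\langle\eta_6^2\rangle$ composed with the collapse onto the top cell; in effect both then prove the same intermediate fact, namely that $\Sigma^2(q_6\pi_1)\circ\Sigma\delta\in[M^{10},\s^8]$ lies in $\{0,\eta_8^2\circ q_{10}\}$. The paper gets this from Mukai's computations \cite{Mu}: since $[M^{10},M^8]\cong\bZ_2^3$, the composite $\Sigma^2\pi_1\circ\Sigma\delta$ is $2$-torsion, so its image in $[M^{10},\s^8]=\bZ_4\{\overline{\eta_8}\}$ is divisible by $2$, i.e.\ equals $0$ or $2\overline{\eta_8}=\eta_8^2\circ q_{10}$; it then concludes by restricting to the bottom cell, after checking that $(\Sigma i_9)^*$ is injective on $[\Sigma M^9,G_2]=\bZ_2\{\langle\eta_6^2\rangle\circ\overline{\eta_8}\}$. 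You get the same fact one dimension lower and geometrically: $\delta\circ i_9\simeq\pm\Sigma\phi_8$ (the attaching map of the $8$-cell) by naturality of the Puppe connecting map, the $\s^6$-component of $\phi_8$ dies because $\mathrm{Sq}^2x_6=(\mathrm{Sq}^1x_3)^2=0$ in $H^*(G_2;\bZ_2)$, hence the composite $\beta=\Sigma(q_6\pi_1)\circ\delta$ factors through the top cell of $M^9$; you then finish directly with $\langle\eta_6^2\rangle\circ\eta_8^2\in\pi_{10}(G_2)=0$, with no need to compute $[\Sigma M^9,G_2]$ or invoke any injectivity. What each approach buys: the paper's route uses only the cofibrations (\ref{G2-1}) and (\ref{G2-2}) plus the quoted Moore-space groups, so it never touches the attaching maps or the cohomology of $G_2$; your route eliminates the reference to \cite{Mu} at the price of standard but additional input---the ring structure of $H^*(G_2;\bZ_2)$ with the Cartan formula, and the identification of $\delta$ on the bottom cell with the suspended attaching map.
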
 
\begin{proof}
From Lemma \ref{G2-lemma1} it suffices to show that $(\Sigma\delta)^*:[\Sigma^2 G_2^{(6)}, G_2]\to [\Sigma M^9, G_2]$ is trivial. 
By Table 5 we easily have
\begin{equation}\label{M2G2}
[\Sigma M^9, G_2]=\bZ_2\{\langle\eta_6^2\rangle\circ\overline{\eta_8}\},\quad (\Sigma i_9)^*(\langle\eta_6^2\rangle\circ\overline{\eta_8})=\langle\eta_6^2\rangle\circ\eta_8
\end{equation}
and
$$
\begin{CD}
\pi_8(G_2)@>(\Sigma^2 q_6)^*>\cong> [\Sigma^2 M_6, G_2]@>(\Sigma^2\pi_1)^*>\cong> [\Sigma^2 G_2^{(6)}, G_2].
\end{CD}
$$
Hence it suffices to to prove the following equality:
$$
(\Sigma i_9)^*(\Sigma\delta)^*(\Sigma^2\pi_1)^*(\Sigma^2 q_6)^*\langle\eta_6^2\rangle=0.
$$
We shall prove this by showing
\begin{equation}\label{1stem}
\Sigma^2 q_6\circ\Sigma^2\pi_1\circ\Sigma\delta\circ\Sigma i_9=0\in\pi_9(\s^8)=\bZ_2\{\eta_8\}.
\end{equation}
By \cite{Mu}, we have the following results.
\begin{gather}
\label{Moore1}[M^{10}, S^8] =\bZ_4\{\overline{\eta_8}\},\quad 2\overline{\eta_8} = \eta_8^2\circ q_{10},\\
\label{Moore2}[M^{10}, M^8] \cong \bZ_2^3.
\end{gather}
We have $2(\Sigma^2\pi_1\circ\Sigma\delta)=0$ by (\ref{Moore2}). 
Hence it follows from (\ref{Moore1}) that $\Sigma^2 q_6\circ\Sigma^2\pi_1\circ\Sigma\delta$ is divisible by $2$. 
Thus (\ref{1stem}) is established. 
\end{proof}

Next we shall show that 

\begin{lemma}\label{G2-lemma3} 
\begin{enumerate}
\item The induced map
$$\Sigma i_{9, 11}^* : [\Sigma G_2^{(11)}, G_2] \to [\Sigma G_2^{(9)}, G_2]$$
is an isomorphism, where $i_{9, 11} : G_2^{(9)} \to G_2^{(11)}$ is the inclusion.
\item $[\Sigma G_2^{(11)}, G_2]=\bZ_2\Big\{\overline{\overline{\langle\eta_6^2\rangle\circ\overline{\eta_8}\circ\Sigma\pi_2}}\Big\}$.
\end{enumerate}
\end{lemma}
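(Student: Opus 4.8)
The plan is to obtain $G_2^{(11)}$ from $G_2^{(9)}$ by attaching the single $11$-cell (there is no $10$-cell) and to feed the resulting cofibration into $[-,G_2]$. Writing $\psi\colon\s^{10}\to G_2^{(9)}$ for the attaching map, the cofibration $\s^{10}\xrightarrow{\psi}G_2^{(9)}\xrightarrow{i_{9,11}}G_2^{(11)}\to\s^{11}$ suspends to give the exact sequence
$$
\pi_{12}(G_2)\xrightarrow{(\Sigma q)^*}[\Sigma G_2^{(11)},G_2]\xrightarrow{(\Sigma i_{9,11})^*}[\Sigma G_2^{(9)},G_2]\xrightarrow{(\Sigma\psi)^*}\pi_{11}(G_2).
$$
Since $\pi_{12}(G_2)=0$ by Table 5, the map $(\Sigma i_{9,11})^*$ is injective, so (1) reduces to proving it is onto, i.e. that $(\Sigma\psi)^*=0$. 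By Lemma \ref{G2-lemma2} and \eqref{M2G2} its domain is $[\Sigma G_2^{(9)},G_2]=\bZ_2\{g\}$ with $g=\langle\eta_6^2\rangle\circ\overline{\eta_8}\circ\Sigma\pi_2$, so everything comes down to the single relation $g\circ\Sigma\psi=0$ in $\pi_{11}(G_2)$.

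To establish this relation I would factor $g\circ\Sigma\psi=\langle\eta_6^2\rangle\circ x$ through $x:=\overline{\eta_8}\circ\Sigma(\pi_2\psi)\in\pi_{11}(\s^8)=\bZ_{24}\{\nu_8\}$ and show that $x$ is divisible by $2$. The input is that $q_{10}\circ\Sigma(\pi_2\psi)$ lies in $\pi_{11}(\s^{10})=\bZ_2\{\eta_{10}\}$; combining this with $2\overline{\eta_8}=\eta_8^2\circ q_{10}$ from \eqref{Moore1} gives $2x=\eta_8^2\circ\big(q_{10}\circ\Sigma(\pi_2\psi)\big)\in\{0,\eta_8^3\}=\{0,12\nu_8\}$, and solving $2x\in\{0,12\nu_8\}$ in $\bZ_{24}$ forces $x\in\{0,6\nu_8,12\nu_8,18\nu_8\}\subset 2\pi_{11}(\s^8)$. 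Hence, by additivity of post-composition with $\langle\eta_6^2\rangle$, we have $g\circ\Sigma\psi\in 2\pi_{11}(G_2)$. On the other hand $\Sigma\psi$ is a suspension, so $(\Sigma\psi)^*$ is a homomorphism and $g\circ\Sigma\psi$ is killed by $2$ because $g$ has order $2$. An element of $\pi_{11}(G_2)=\bZ\oplus\bZ_2$ that is at once $2$-divisible and $2$-torsion is zero, so $g\circ\Sigma\psi=0$ and $(\Sigma\psi)^*=0$, proving (1).

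Given (1), statement (2) is immediate: the isomorphism $(\Sigma i_{9,11})^*$ carries the generator across, so the unique class restricting to $g$, denoted $\overline{\overline{\langle\eta_6^2\rangle\circ\overline{\eta_8}\circ\Sigma\pi_2}}$, generates $[\Sigma G_2^{(11)},G_2]\cong\bZ_2$. I expect the one genuine obstacle to be the vanishing $g\circ\Sigma\psi=0$: a priori the composite could equal the nonzero $2$-torsion class $\hat i_*[\nu_5^2]$, and the divisibility computation for $x$ is precisely what excludes that possibility. A convenient feature of the parity argument is that it never requires the exact value of the $9$-cell component $q_9\circ\pi_2\psi\in\pi_{10}(\s^9)$ of $\psi$; only the coarse facts $\pi_{11}(\s^{10})=\bZ_2$ and $2\overline{\eta_8}=\eta_8^2\circ q_{10}$ enter. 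The single point that deserves care is the additivity of $x\mapsto\langle\eta_6^2\rangle\circ x$, which is safe here since it is post-composition into a homotopy group.
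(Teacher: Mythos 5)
Your proof is correct, and its skeleton matches the paper's: both use the cofibration $\s^{10}\xrightarrow{\psi}G_2^{(9)}\xrightarrow{i_{9,11}}G_2^{(11)}$, obtain injectivity of $(\Sigma i_{9,11})^*$ from $\pi_{12}(G_2)=0$, and deduce assertion (2) from assertion (1) together with Lemma \ref{G2-lemma2} and \eqref{M2G2}. The genuine difference is the surjectivity step: the paper disposes of it by citing Lemmas 3.9(i) and 3.11 of Mimura--Sawashita \cite{MS}, whereas you prove the required vanishing $(\Sigma\psi)^*=0$ directly. Your computation --- writing the image of the generator as $\langle\eta_6^2\rangle\circ x$ with $x=\overline{\eta_8}\circ\Sigma(\pi_2\circ\psi)\in\pi_{11}(\s^8)=\bZ_{24}\{\nu_8\}$, using $2\overline{\eta_8}=\eta_8^2\circ q_{10}$ from \eqref{Moore1} together with $\pi_{11}(\s^{10})=\bZ_2$ to conclude that $x$ is even, and then playing $2$-divisibility against $2$-torsion in $\pi_{11}(G_2)=\bZ\oplus\bZ_2$ --- is sound; note that it does not even require the exact value $\eta_8^3=12\nu_8$, only that $\eta_8^3$ has order at most $2$ in $\bZ_{24}$, and the homomorphism properties you invoke are automatic since $G_2$ is a topological group, so every set $[X,G_2]$ is a group and every induced map a homomorphism. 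What your route buys is self-containedness: the only external input is Mukai's computation \eqref{Moore1}, which the paper already uses for Lemma \ref{G2-lemma2}, so the dependence on \cite{MS} at this point disappears. What the paper's route buys is brevity, and it is stylistically consistent with the proof of Lemma \ref{G2-lemma4}, where the analogous vanishing of $(\Sigma f)^*$ is likewise quoted from \cite{MS}.
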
 
\begin{proof}
The assertion (1) follows from $\pi_{12}(G_2) = 0$ (\cite{M}) and  \cite[Lemmas 3.9\,(i) and 3.11]{MS} using the cofibration 
$$
\begin{CD}
 \s^{10}@>>>G_2^{(9)} @>i_{9,11}>> G_2^{(11)}.
\end{CD}
$$
The assertion (2) follows from (1), (\ref{M2G2}) and Lemma \ref{G2-lemma2}.
\end{proof}

Let $f : \s^{13} \to G_2^{(11)}$ denote the attaching map of the top cell of $G_2$. 

\begin{lemma}\label{G2-lemma4} There exists the following short exact sequence.
\begin{equation}\label{G2-short-exact}
\begin{CD}
0 @>>> \mathbb{Z}_2 @>>> [\Sigma G_2, G_2] @>>> \mathbb{Z}_2 @>>> 0
\end{CD}
\end{equation}
\end{lemma}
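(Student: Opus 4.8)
The plan is to run the Puppe cofiber sequence attached to the top cell. Since $G_2 = G_2^{(11)}\cup_f e^{14}$, there is a cofibration $\s^{13}\xrightarrow{f}G_2^{(11)}\xrightarrow{\iota}G_2\xrightarrow{q}\s^{14}$ with $\iota$ the inclusion; suspending once and applying the contravariant functor $[-,G_2]$ produces the exact sequence
\begin{equation*}
\pi_{15}(G_2)\xrightarrow{(\Sigma q)^*}[\Sigma G_2, G_2]\xrightarrow{(\Sigma\iota)^*}[\Sigma G_2^{(11)}, G_2]\xrightarrow{(\Sigma f)^*}\pi_{14}(G_2),
\end{equation*}
with the additional term $[\Sigma^2 G_2^{(11)}, G_2]\xrightarrow{(\Sigma^2 f)^*}\pi_{15}(G_2)$ sitting on the left. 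By Lemma \ref{G2-lemma3}\,(2) the third term is $\bZ_2$, and by Table 5 we have $\pi_{15}(G_2)\cong\bZ_2$ and $\pi_{14}(G_2)\cong\bZ_{168}\oplus\bZ_2$. Hence the short exact sequence (\ref{G2-short-exact}) will follow at once provided I show that the two end homomorphisms $(\Sigma^2 f)^*$ and $(\Sigma f)^*$ both vanish: the first makes $(\Sigma q)^*$ injective with image $\bZ_2$, and the second makes $(\Sigma\iota)^*$ surject onto $\bZ_2$, so that the middle group sits in $0\to\bZ_2\to[\Sigma G_2, G_2]\to\bZ_2\to 0$.

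The common input is that $f$, being the attaching map of the top cell of the $H$-space $G_2$, is stably trivial by Browder--Spanier \cite{bs}. Consequently, for any $\phi$ the element $\phi\circ\Sigma^2 f$ (resp. $\phi\circ\Sigma f$) becomes null after finitely many further suspensions, so it is a stably trivial element of $\pi_{15}(G_2)$ (resp. of $\pi_{14}(G_2)$). I will therefore argue that the relevant torsion classes are stably nontrivial, so that $0$ is the only stably trivial class available. For $(\Sigma^2 f)^*$ this is immediate and clean: the generator $\langle\overline{\nu}_6+\epsilon_6\rangle\circ\eta_{14}$ of $\pi_{15}(G_2)$ maps under $\hat{p}_*$ of the fibration $\SU(3)\xrightarrow{\hat{i}}G_2\xrightarrow{\hat{p}}\s^6$ to $(\overline{\nu}_6+\epsilon_6)\circ\eta_{14}$, whose stabilization is $\eta\sigma\circ\eta=\eta^2\sigma\neq 0$ in $G_9$ (using $\overline{\nu}+\epsilon=\eta\sigma$ stably). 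Hence that generator is stably nontrivial and $(\Sigma^2 f)^*=0$.

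The map $(\Sigma f)^*$ is the delicate point, and I expect it to be the main obstacle. Its image is generated by $g\circ\Sigma f$, where $g=\overline{\overline{\langle\eta_6^2\rangle\circ\overline{\eta_8}\circ\Sigma\pi_2}}$ is the generator from Lemma \ref{G2-lemma3}\,(2); this class has order dividing $2$ and is stably trivial, so it is one of the four elements of order $\le 2$ in $\pi_{14}(G_2)\cong\bZ_{168}\oplus\bZ_2$, namely $0$, the order-two element of the $\bZ_8$-summand generated by $\langle\overline{\nu}_6+\epsilon_6\rangle$, the class $\hat{i}_*[\nu_5^2]\circ\nu_{11}$, and their sum. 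The trouble is that $\hat{p}_*$ detects none of the nonzero candidates, since $4(\overline{\nu}_6+\epsilon_6)$ stabilizes to $0$ and $\hat{p}_*\hat{i}_*=0$. To resolve this I would argue in two stages: first, determine $g\circ\Sigma f$ modulo the fibre by computing $\hat{p}_*(g\circ\Sigma f)=(\hat{p}\circ g)\circ\Sigma f$ from the explicit description of $g$ on the $9$-skeleton, where $\hat{p}\circ g$ is assembled from $\eta_6^2$ and $\overline{\eta_8}$, thereby excluding the two classes involving $\langle\overline{\nu}_6+\epsilon_6\rangle$; and second, control the remaining fibre class $\hat{i}_*[\nu_5^2]\circ\nu_{11}$ through the homotopy exact sequence of $\SU(3)\xrightarrow{\hat{i}}G_2\xrightarrow{\hat{p}}\s^6$, showing it cannot coincide with the stably trivial element $g\circ\Sigma f$. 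Once all three nonzero classes of order $\le 2$ are ruled out, we obtain $g\circ\Sigma f=0$, hence $(\Sigma f)^*=0$, which completes the short exact sequence (\ref{G2-short-exact}).
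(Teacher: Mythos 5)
Your setup and your treatment of $(\Sigma^2 f)^*$ coincide with the paper's proof: the same Puppe sequence of $\s^{13}\xrightarrow{f}G_2^{(11)}\subset G_2$, the same identification of the outer terms via Lemma \ref{G2-lemma3}\,(2) and Table 5, and the same detection argument that the generator $\langle\overline{\nu}_6+\varepsilon_6\rangle\circ\eta_{14}$ of $\pi_{15}(G_2)$ is stably nontrivial (push forward by $\hat{p}$ and use $\overline{\nu}+\varepsilon=\eta\sigma$, giving $\eta^2\sigma\neq 0$), while every composite with $\Sigma^2f$ is stably trivial by Browder--Spanier \cite{bs}. That half is correct and is exactly what the paper does.

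The gap is in the other vanishing, $(\Sigma f)^*=0$. The paper does not prove this at all: it quotes it from Mimura--Sawashita, \cite[Lemma 3.13]{MS}. You attempt to prove it directly, but what you give is a plan whose decisive step cannot be carried out with the tool you name. Granting your stage 1 (itself not executed, and note that computing $(\hat{p}\circ g)\circ\Sigma f$ needs more than ``the explicit description of $g$ on the $9$-skeleton'': $\Sigma f$ lands in $\Sigma G_2^{(11)}$, where $g$ is pinned down only up to the extension ambiguity coming from $\pi_{12}(\s^6)=\bZ_2\{\nu_6^2\}$), you are left to decide whether $g\circ\Sigma f$ equals $0$ or $\hat{i}_*[\nu_5^2]\circ\nu_{11}$. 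Your only invariant is the stable triviality of $g\circ\Sigma f$, so you would have to show that $\hat{i}_*[\nu_5^2]\circ\nu_{11}$ is stably nontrivial in $G_2$. The homotopy exact sequence of $\SU(3)\xrightarrow{\hat{i}}G_2\xrightarrow{\hat{p}}\s^6$, which is what you propose to use, cannot give this: the class lies in $\image\hat{i}_*=\Ker\hat{p}_*$, exactly where that sequence carries no further information, and in any case an unstable exact sequence does not detect stable (non)triviality. What is actually required is a piece of $\pi_{14}^s(G_2)$: one knows $\Sigma^\infty\big([\nu_5^2]\circ\nu_{11}\big)\neq 0$ in $\pi_{14}^s(\SU(3))$, since $p_*\big([\nu_5^2]\circ\nu_{11}\big)=\nu_5^3$ stabilizes to $\nu^3\neq 0$, but one must then show this class is not killed by $\hat{i}_*$, i.e.\ does not lie in the image of the stable boundary map from the stable homotopy of the cofibre of $\hat{i}$ (a complex with cells in dimensions $6,9,11,14$). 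That is a genuine computation of the same order of difficulty as the Mimura--Sawashita lemma the paper invokes, nothing in your sketch supplies it, and its conclusion is not self-evidently true. As written, therefore, the short exact sequence is not established.
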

\begin{proof}
In the exact sequence induced by the cofibration  $\s^{13} \xrightarrow{f} G_2^{(11)} \subset G_2$
\begin{equation}\label{G2-6}
[\Sigma^2G^{(11)}, G_2] \xrightarrow{(\Sigma^2f)^*} \pi_{15}(G_2) \xrightarrow{(\Sigma q)^*} [\Sigma G_2, G_2] 
\longrightarrow [\Sigma G^{(11)}_2, G_2] \xrightarrow{(\Sigma f)^*} \pi_{14}(G_2)
\end{equation}
$(\Sigma f)^*$ is trivial by \cite[Lemma 3.13]{MS}. 
Here $q:G_2\to\s^{14}$ is the quotient map. 
We show that $(\Sigma^2 f)^*$ is also trivial. 
To prove this, first we recall that
$$\pi_{15}(G_2) = \mathbb{Z}_2\big\{\langle\bar\nu_6 + \varepsilon_6\rangle\circ \eta_{14}\big\}$$ from \cite{M}. 
Here $\langle\bar\nu_6 + \varepsilon_6\rangle$ is an element of $\pi_{14}(G_2)$ 
such that $\hat{p}_*\langle\bar\nu_6 + \varepsilon_6\rangle = \bar\nu_6 + \varepsilon_6$ 
by the bundle projection map $\hat{p}: G_2 \to \s^6$. 
By \cite[Lemma 6.3, Theorem 7.2]{T3},  $(\bar\nu_6 + \varepsilon_6)\circ \eta_{14} $ is stably nontrivial 
and so is $\langle\bar\nu_6 + \varepsilon_6\rangle\circ\eta_{14}$. 
On the other hand, the attaching map $f$ is stably trivial by \cite{bs}. 
This means 
$$\mathrm{Im}~(\Sigma^2f)^* = 0$$
in (\ref{G2-6}). 
Thus by (\ref{G2-6}), Lemma \ref{G2-lemma2} and Lemma \ref{G2-lemma3}, we obtain the result. 
\end{proof}

\begin{theorem}\label{G2-theorem}
$$ [\Sigma G_2, G_2] =\bZ_2\{\langle\overline{\nu}_6+\varepsilon_6\rangle\circ\eta_{14}\circ\Sigma q\}\oplus
\bZ_2\Big\{\overline{\overline{\langle\eta_6^2\rangle\circ\overline{\eta_8}\circ\Sigma\pi_2}}\Big\}. $$
\end{theorem}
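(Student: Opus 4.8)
The plan is to start from the short exact sequence $0\to\bZ_2\to[\Sigma G_2,G_2]\to\bZ_2\to 0$ provided by Lemma~\ref{G2-lemma4}; since this already forces $|[\Sigma G_2,G_2]|=4$, everything comes down to deciding whether the sequence splits. First I would identify the two ends using the Puppe sequence \eqref{G2-6} of the cofibration $\s^{13}\xrightarrow{f}G_2^{(11)}\subset G_2$. The kernel is $(\Sigma q)^*\pi_{15}(G_2)$; because $(\Sigma^2f)^*=0$ (shown in the proof of Lemma~\ref{G2-lemma4}) the map $(\Sigma q)^*$ is injective, so with $\pi_{15}(G_2)=\bZ_2$ (Table~5) the kernel is generated by the order-two element $u:=\langle\overline{\nu}_6+\varepsilon_6\rangle\circ\eta_{14}\circ\Sigma q$. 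The quotient is $[\Sigma G_2^{(11)},G_2]=\bZ_2$, whose generator is $\overline{\overline{\langle\eta_6^2\rangle\circ\overline{\eta_8}\circ\Sigma\pi_2}}$ by Lemmas~\ref{G2-lemma2}, \ref{G2-lemma3} and \eqref{M2G2}; let $W$ denote the same symbol now read as a chosen extension of this class over the top cell $e^{15}$ of $\Sigma G_2$, which exists since $(\Sigma f)^*$ kills it.

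Then I would reduce the theorem to the single equation $2W=0$. As $[\Sigma G_2^{(11)},G_2]=\bZ_2$, the restriction $\widetilde W$ of $W$ to $\Sigma G_2^{(11)}$ has order two, hence $2W$ restricts to $0$ there and so lies in the kernel $\{0,u\}$. Moreover $2W$ is independent of the chosen extension: two extensions differ by an element of the indeterminacy $(\Sigma q)^*\pi_{15}(G_2)=\{0,u\}$, which is annihilated on doubling because $2u=0$. Thus $2W\in\{0,u\}$ is a well-defined obstruction, and the sequence of Lemma~\ref{G2-lemma4} splits, equivalently $[\Sigma G_2,G_2]\cong\bZ_2\oplus\bZ_2$, if and only if $2W=0$.

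The hard part is to prove $2W=0$ and not $2W=u$. Heuristically the vanishing is clear: the factor $\overline{\eta_8}\in[M^{10},\s^8]$ underlying $W$ satisfies $2\overline{\eta_8}=\eta_8^2\circ q_{10}$ by \eqref{Moore1}, and $\langle\eta_6^2\rangle\circ\eta_8^2\in\pi_{10}(G_2)=0$ (Table~5), so a formal doubling of the class produces $0$. The difficulty is that there is no retraction $\Sigma G_2\to\Sigma G_2^{(9)}$, so this cancellation has to be effected at the level of the extension rather than of the underlying class. I would make it rigorous by writing $2W=(\Sigma q)^*v$ and identifying $v\in\pi_{15}(G_2)$ with a secondary (Toda-bracket) composition $\langle\widetilde W,\Sigma f,2\iota_{14}\rangle$ (or a suitable variant) coming from the Puppe sequence of $f$, then evaluating it by pushing the relation $2\overline{\eta_8}=\eta_8^2q_{10}$ through the bracket and invoking $\pi_{10}(G_2)=0$ together with the stable triviality of $f$ from \cite{bs}. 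An alternative, and probably cleaner, route is to run the computation through the fibration $\SU(3)\xrightarrow{\hat i}G_2\xrightarrow{\hat p}\s^6$ exactly as in Subsection~3.7, assembling a commutative ladder of the relevant homotopy groups and reading off $v$ by naturality; I expect this to be where the genuine work lies. Once $2W=0$ is established, Lemma~\ref{G2-lemma4} splits and $[\Sigma G_2,G_2]=\bZ_2\{u\}\oplus\bZ_2\{W\}$, which is the asserted decomposition.
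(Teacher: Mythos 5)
Your setup is sound and coincides with the paper's: the exact sequence of Lemma \ref{G2-lemma4}, the identification of the kernel generator $u=\langle\overline{\nu}_6+\varepsilon_6\rangle\circ\eta_{14}\circ\Sigma q$ and of a lift $W$ of the generator of $[\Sigma G_2^{(11)},G_2]$, and the correct, well-posed reduction of the theorem to the single equation $2W=0$. The genuine gap is that this equation is never proved. Your heuristic via $2\overline{\eta_8}=\eta_8^2\circ q_{10}$ and $\pi_{10}(G_2)=0$ only re-derives that the restriction $\widetilde W\in[\Sigma G_2^{(11)},G_2]$ has order two, which is already Lemma \ref{G2-lemma3}\,(2); it says nothing about the extension over the top cell, as you yourself concede. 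The two routes you then outline --- evaluating a bracket $\{\widetilde W,\Sigma f,2\iota_{14}\}$, or chasing a ladder over the fibration $\SU(3)\xrightarrow{\hat i}G_2\xrightarrow{\hat p}\s^6$ --- are left entirely unexecuted, with no argument that either bracket or ladder can actually be computed from the available data; so the proof stops exactly where the real content begins.

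For comparison, the paper closes this gap with a short \emph{stable} argument that needs nothing about the internal structure of $W$. Since $\Sigma^\infty f=0$ by Browder--Spanier \cite{bs}, the map $\Sigma^\infty q^*:\pi_{15}^s(G_2)\to\{\Sigma G_2,G_2\}$ admits a left inverse $\ell$. If the sequence did not split, then $2W=u$, and applying $\Sigma^\infty\hat p_*\circ\ell$ to both sides gives a contradiction: the left-hand side is twice an element of $\pi_{15}^s(\s^6)\cong\pi_9^s(S^0)\cong\bZ_2^3$, hence zero, while the right-hand side is $\Sigma^\infty\hat p_*\big(\langle\overline{\nu}_6+\varepsilon_6\rangle\circ\eta_{14}\big)=(\overline{\nu}+\varepsilon)\eta=\eta^2\sigma\neq0$ by \cite{T3}. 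So the decisive use of stable triviality of $f$ is not as an input to a bracket computation but to produce the retraction $\ell$, after which everything follows from the exponent of $\pi_9^s(S^0)$. To complete your proposal you would need either to supply this argument or to actually carry out one of your two sketched computations.
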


\begin{proof}
By Lemma \ref{G2-lemma4}, $[\Sigma G_2,G_2]$ is isomorphic to $\bZ_2^2$ or $\bZ_4$. 
To induce a contradiction, assume that it is isomorphic to $\bZ_4$. 
In this case, by Lemma \ref{G2-lemma3}\,(2) and the proof of Lemma \ref{G2-lemma4}, we have
$$
2\, \overline{\overline{\langle \eta_6^2\rangle \circ \overline{\eta_8} \circ \Sigma\pi_2}} 
=\langle \bar{\nu}_6 + \epsilon_6 \rangle \circ \eta_{14} \circ \Sigma q.
$$
Let $\ell : \{\Sigma G_2, G_2\} \to  \pi^s_{15}(G_2)$ be a left inverse for $\Sigma^\infty q^* : \pi^s_{15}(G_2) \to \{\Sigma G_2, G_2\}$. 
It exists, because $\Sigma^\infty f=0$. 
Here $\{X,Y\}=\lim_{n\to\infty}[\Sigma^nX,\Sigma^nY]$ and $\pi_n^s(X)=\{\s^n,X\}$. 
We then have
\begin{align*}
2\,\Sigma^\infty\hat{p}_* \circ \ell \Big(\Sigma^\infty \overline{\overline{\langle \eta_6^2\rangle \circ \overline{\eta_8} \circ \Sigma\pi_2}}\Big) &=  \Sigma^\infty \hat{p}_* \circ \ell \Big(2\Sigma^\infty \overline{\overline{\langle \eta_6^2\rangle \circ \overline{\eta_8} \circ \Sigma\pi_2}}\Big)\\
&=  \Sigma^\infty \hat{p}_* \circ \ell \circ \Sigma^\infty q^*(\langle \bar{\nu} + \epsilon \rangle \circ \eta)\\
&= (\bar{\nu} + \epsilon)\eta\\
&= \eta^2\sigma.
\end{align*}
Note that the element 
$2\Sigma^\infty\hat{p}_* \circ \ell \Big(\Sigma^\infty \overline{\overline{\langle \eta_6^2\rangle \circ \overline{\eta_8} \circ \Sigma\pi_2}}\Big)$
is trivial since $\pi_9^s(S^0) \cong \bZ_2^3$ (\cite{T3}). This contradicts  $\eta^2\sigma \not= 0$ (\cite{T3}).
Therefore, the short exact sequence (\ref{G2-short-exact}) splits and we obtain the result. 
\end{proof}

\end{document}